\providecommand{\abs}[1]{\left\vert #1 \right\vert}
\providecommand{\norm}[1]{\left\Vert #1 \right\Vert}
\providecommand{\pt}[1]{\left( #1 \right)}
\newcommand{\RR}{\mathbb R}
\newcommand{\NN}{\mathbb N}
\newcommand{\ve}{\varepsilon}
\newtheorem{theorem}{Theorem}[section]
\newtheorem{proposition}{Proposition}[section]
\newtheorem{lemma}{Lemma}[section]
\newtheorem{remark}{Remark}[section]
\Crefname{corollary}{Corollary}{Corollaries}
\Crefname{lemma}{Lemma}{Lemmas}
\Crefname{theorem}{Theorem}{Theorems}
\Crefname{proposition}{Proposition}{Propositions}
\begin{document}
	
\title{\(L^p\) theory for a singular Sturm-Liouville equation}

\author{Hernán Castro}
\email{hcastro@utalca.cl}
\address{Instituto de Matem{\'a}ticas, Universidad de Talca, Casilla 747, Talca, Chile}

\author{Iván Proaño}
\email{ivan.proano@utalca.cl}
\address{Instituto de Matem{\'a}ticas, Universidad de Talca, Casilla 747, Talca, Chile}
\thanks{I.P. was supported by \textit{ANID} becas Doctorado Nacional 21240838}

\date{\today}

\begin{abstract}
	In this paper we consider the following Sturm-Liouville equation
	\begin{equation}\tag{\(\star\)} \label{abs-eq}
		\left\{
		\begin{aligned}
			-(x^{2\alpha}u'(x))'+u(x)&=f(x)  && \text{ on } (0,1],\\
			u(1)&= 0, &&
		\end{aligned}
		\right. 
	\end{equation} 
	where $\alpha<1$ is a nonzero real number and $f$ belongs to $L^p(0,1)$ for \(p\geq 1\). We analyze the existence and regularity of solutions to \eqref{abs-eq} under suitable weighted Dirichlet boundary condition at the origin.
\end{abstract}

\keywords{singular Sturm-Liouville equation, power type weight, \(L^p\) theory}
\date{\today}
\subjclass[2020]{34B05, 34B08, 34B16}

\maketitle

\section{Introduction}

In this work we are interested in the following family of Sturm-Liouville equations
\begin{equation}\label{SL-eq}
	\left\{
	\begin{aligned}
		-(x^{2\alpha}u'(x))'+u(x)&=f(x)  && \text{ on } (0,1],\\
		u(1)&= 0, &&
	\end{aligned}
	\right.
\end{equation}
where \(f\in L^p(0,1)\) for \(1\leq p\leq \infty\) and \(\alpha<1\) is a non-zero real number.

The literature is vast regarding both regular and singular Sturm-Liouville equations and it is not our intention to go through the history of these equations, but the interested reader might want to use the monograph of Anton Zettl \cite{Zet2005} and the comprehensive reference list therein as a starting point into the theory.

It is also worth mentioning that the choice of the weight \(x^{2\alpha}\) is not arbitrary. As it was shown by Stuart \cite[Section 1.1]{St01}, if one considers an unshearable, inextensible rod whose resistance to bending is governed by the Bernoulli–Euler law then the differential operator
\[
L_Au(x):=-(A(x)u'(x))'
\]
appears naturally. In that model established by Stuart, the function \(A\) represents the profile of the tapered rod, and it is said to be of order \(p\) if
\[
\lim_{x\to 0}\frac{A(x)}{x^{p}}=L
\]
for some positive constant \(L\). A study of the spectrum of \(L_A\) and other relevant results regarding a non-linear problem modeling the tapered rod were established by Stuart and Vuillaume. We refer the interested to the series of articles \cite{St00-1,St01,St02,SV03,SV04,Vui03,V03}.

Later, Castro and Wang in \cite{CW11-1, CW11-2, Cas2017-1} studied \eqref{SL-eq} for the case \(\alpha>0\) and \(f\in L^2\). In those articles the problem of existence and uniqueness was studied together a study of the spectral properties of the operator \(L_\alpha u(x) =-(x^{2\alpha}u'(x))'+u(x)\), but the question of regularity and behavior of solutions near the origin was of particular interest.

On the one hand, it was shown in \cite{CW11-1} that due to the weight \(x^{2\alpha}\) the pure Dirichlet condition \(u(0)=0\) only makes sense for \(\alpha<\frac12\) and in that case the unique solution \(u_D\) satisfies
\[
x^{2\alpha-1}u_D\in H^1(0,1)
\]
so that in particular \(u_D(x)\) not only vanishes at \(x=0\) but it vanishes like \(x^{1-2\alpha}\).

On the other hand, a weighted Neumann boundary condition \(x^{2\alpha}u'(x)\big|_{x=0}=0\) can be imposed for every \(\alpha>0\). Just as in the Dirichlet case, the behavior near the origin of the unique solution \(u_N\) under this condition is better than the natural one in the sense that one verifies that
\[
x^{2\alpha-1}u'\in L^2(0,1)
\quad\text{and}\quad
\lim_{x\to 0^+}x^{2\alpha-\frac12}u'(x)=0.
\]

The main motivation of this article is to extend what was done in \cite{CW11-1} to the case \(f\in L^p(0,1)\) for \(1\leq p\leq \infty\) while also considering the case \(\alpha<0\) which was not studied before. As the reader can see in the \cref{app-bessel}, solutions to equation \eqref{SL-eq} can be represented rather explicitly with the aid of Bessel functions, however one of the purposes of this article is to avoid explicit representation of solutions and instead use tools that might be used for more general problems. For instance, as it was mentioned at the beginning, it would be of interest to study \eqref{SL-eq} where instead of the function \(x^{2\alpha}\) we have a general weight satisfying \(A(x)\sim x^{2\alpha}\) near the origin in suitable fashion: in that case one might not have good representation formulas for the solutions. Instead of going through that road we use tools that are common in the study of partial differential equations such as functional analysis methods and the use of \emph{a priori} bounds to prove existence and regularity of solutions. Additionally, while some of the techniques we use throughout this work make use of the one dimensional nature of the problem, there are also parts that can be generalized to similar problems in higher dimension such as
\begin{equation}\label{PDE-ex}
	-\mathrm{div}(A(x)\cdot\nabla u(x))+u(x)=f(x)  \qquad \text{ in } \Omega\subseteq\RR^N,\\
\end{equation}
where \(A(x)\) could be a matrix valued function with eigenvalues behaving like the radial weight \(\abs{x}^{\alpha}\) or the monomial weight \(x^A=\abs{x_1}^{a_1}\cdot\ldots\cdot\abs{x_N}^{a_N}\). In particular the use of weighted Sobolev spaces and its properties associated to the weight function \(A\) (such as weighted Sobolev inequalities and embeddings into classical and weighted Lebesgue spaces) are heavily used throughout this work and such tools are available to the study of equations like \eqref{PDE-ex} (for instance the Caffarelli-Kohn-Nirenberg inequality \cite{CKN1984} for \(\abs{x}^\alpha\) or what was done in \cite{CR-O2013-1,Cas2016-2} for monomials \(x^A\)). This later extension to higher dimension is something we are interested in and this article could be thought as a stepping stone towards that goal.

As it was established in \cite{CW11-1} it is convenient to separate the main results regarding \eqref{SL-eq} into two cases depending on the behavior near the origin we want to prescribe: the Dirichlet problem and the Neumann problem.

\subsection{Dirichlet problem}

We first consider \eqref{SL-eq} under the condition \(\lim\limits_{x\to 0^+}u(x)=0\). As it was observed in \cite{CW11-1} this is only possible for \(\alpha<\frac12\). We recover the results obtained for the case \(f\in L^2\) and \(0<\alpha<\frac12\) in \cite{CW11-1} and extend them for \(f\in L^p\) and every \(\alpha<\frac12\)

\begin{theorem}\label{Thm-DIR}
	For any given \(\alpha<\frac12\) and \(f\in L^p(0,1)\) with \(1\leq p\leq \infty\) there exists a unique function \(u_{D}\in W^{2,p}_{loc}((0,1])\) satisfying \eqref{SL-eq} a.e. and the following properties:
	\begin{enumerate}[label=(\roman*)]
		\item \(u_{D}\in L^p(0,1)\) with \(\norm{u_{D}}_{L^p}\leq C\norm{f}_{L^p}\),
		\item  $x^{2\alpha}u_{D}', x^{2\alpha-1}u_{D}\in W^{1,p}(0,1)$ with 
		\[
		\|x^{2\alpha}u_{D}'\|_{W^{1,p}}+\|x^{2\alpha-1}u_{D}\|_{W^{1,p}}\leq C \|f\|_{L^p},
		\]
		\item $x^{2\alpha}u_{D}\in W^{2,p}(0,1)$ with $\|x^{2\alpha}u_{D}\|_{W^{2,p}}\leq C \|f\|_{L^p}.$
	\end{enumerate}
	
	Additionally, \(u_D\) satisfies 
	\begin{enumerate}[resume, label=(\roman*)]
		\item $u_{D}\in C^{0,\frac{1}{2}-\alpha}[0,1],$ with $\|u_{D}\|_{C^{0,\frac{1}{2}-\alpha}}\leq C \|f\|_{L^p}$ when \(0<\alpha<\frac12\).
		\item $u_{D}\in C^{0,\frac{1}{2}}[0,1],$ with $\|u_{D}\|_{C^{0,\frac{1}{2}}}\leq C \|f\|_{L^p}$ when \(\alpha<0\).
		\item \(\lim\limits_{x\to 0^+}u_D(x)=0\), and in fact one has $\abs{x^{2\alpha-1}u_{D}(x)}\leq Cx^{\frac1p}$.
	\end{enumerate}
\end{theorem}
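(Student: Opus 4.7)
The strategy is first to obtain existence and uniqueness via a weighted variational formulation and then to bootstrap up to the claimed regularity by combining direct manipulation of the equation with an integral representation and a scaling argument. For existence I would introduce the weighted Hilbert space
\[
H = \{u \in L^2(0,1) : x^\alpha u' \in L^2(0,1),\ u(1) = 0\},
\]
equipped with the bilinear form $a(u,v) = \int_0^1 x^{2\alpha} u' v' \D x + \int_0^1 uv \D x$. For $\alpha<\tfrac12$, a weighted Hardy inequality (using $u(1)=0$) makes $H$ complete and forces $\lim_{x \to 0^+} u(x) = 0$ in a natural weighted sense, ensuring coercivity, so Lax--Milgram yields a unique weak solution $u_D$ for every $f \in L^2$. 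For the $L^p$ bound (i), I would test the equation with $\abs{u_D}^{p-2}u_D$ (with a smooth truncation to handle the endpoints $p=1$ and $p=\infty$): after integrating by parts, the weighted gradient contribution is nonnegative, the boundary term at $x = 1$ vanishes because $u_D(1) = 0$, and the boundary term at $x = 0$ vanishes thanks to the Dirichlet condition combined with the boundedness of $w := x^{2\alpha}u_D'$ which follows from the equation. H\"older's inequality then yields $\norm{u_D}_{L^p} \le \norm{f}_{L^p}$, and density extends this to arbitrary $f \in L^p$.

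For the higher regularity (ii), the bound on $x^{2\alpha}u_D'$ is immediate: rewriting the equation as $(x^{2\alpha}u_D')' = u_D - f \in L^p$ gives $x^{2\alpha}u_D' \in W^{1,p}$. The delicate claim is $x^{2\alpha-1}u_D \in W^{1,p}$, because the termwise derivative $(2\alpha-1)x^{2\alpha-2}u_D + x^{2\alpha-1}u_D'$ displays non-integrable singularities at the origin which must cancel. The key device is to use the Dirichlet condition to rewrite
\[
u_D(x) = \int_0^x t^{-2\alpha} w(t) \D t,
\]
and then perform the scaling substitution $t = xs$ to obtain
\[
x^{2\alpha-1} u_D(x) = \int_0^1 s^{-2\alpha} w(xs) \D s, \qquad (x^{2\alpha-1} u_D)'(x) = \int_0^1 s^{1-2\alpha} w'(xs) \D s.
\]
Minkowski's inequality combined with a rescaling computation then gives $\norm{(x^{2\alpha-1}u_D)'}_{L^p} \le C \norm{w'}_{L^p} \le C \norm{f}_{L^p}$, the integral $\int_0^1 s^{1-2\alpha-1/p} \D s$ being finite for every $p \ge 1$ when $\alpha < \tfrac12$; the $L^p$-control of $x^{2\alpha-1}u_D$ itself follows from the one-dimensional embedding $W^{1,p}(0,1) \hookrightarrow L^\infty$ applied to $w$. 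Part (iii) is then immediate since $(x^{2\alpha}u_D)' = 2\alpha\,x^{2\alpha-1} u_D + x^{2\alpha}u_D'$ is a sum of two $W^{1,p}$ functions.

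For the H\"older estimates (iv) and (v), I would use the representation $u_D(y)-u_D(x) = \int_x^y t^{-2\alpha}w(t)\D t$ together with $w \in L^\infty$: when $0 < \alpha < \tfrac12$ the weight $t^{-2\alpha}$ is integrable with $\int_x^y t^{-2\alpha}\D t \le C|y-x|^{1-2\alpha}$, which yields the exponent claimed in (iv); when $\alpha < 0$ the weight is bounded and (v) follows at once. The pointwise boundary rate in (vi) is a refinement of the representation used in (ii), obtained by quantifying $w(xs) - w(0)$ through H\"older's inequality applied to $w' \in L^p$. The central obstacle throughout is the $W^{1,p}$ estimate for $x^{2\alpha-1}u_D$ in (ii): the two singular pieces of its product-rule derivative only combine into something in $L^p$ via a cancellation that is invisible in the pointwise formula but becomes transparent once the Dirichlet condition is used to recast $u_D$ as an integral of $w$ and the scale-invariant substitution $t = xs$ is applied.
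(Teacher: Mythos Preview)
Your overall architecture matches the paper's---variational existence, then bootstrap from the identity $w'=u-f$---and your scaling substitution $t=xs$ followed by Minkowski is essentially the paper's identity $(x^{2\alpha-1}u)'(x)=x^{2\alpha-2}\int_0^x w'(s)s^{1-2\alpha}\,ds$ after the change of variable $s=xt$; in fact your Minkowski argument treats all $1\le p\le\infty$ at once, whereas the paper splits into Tonelli ($p=1$), Hardy ($1<p<\infty$) and a direct bound ($p=\infty$). That part is fine and arguably cleaner.

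There are, however, two genuine gaps.

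\textbf{The variational space is wrong.} Your space $H=\{u:x^\alpha u'\in L^2,\ u(1)=0\}$ is the paper's $X^\alpha_{\cdot 0}$, the \emph{Neumann} space. No weighted Hardy inequality using only $u(1)=0$ forces $u(0)=0$: for $\alpha<\tfrac12$ one has $X^\alpha\hookrightarrow C([0,1])$ and, for instance, $1-x\in H$ with value $1$ at the origin. The Lax--Milgram solution in $H$ is $u_N$, not $u_D$. The paper imposes the condition $u(0)=0$ explicitly by working in the closed subspace $X^\alpha_{00}=\{u\in X^\alpha: u(0)=u(1)=0\}$, which is well defined precisely because of the embedding into $C([0,1])$. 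Without this, your integral representation $u_D(x)=\int_0^x t^{-2\alpha}w(t)\,dt$ is unjustified.

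\textbf{Membership of $w$ in $L^p$ is not ``immediate'', especially for $\alpha<0$.} From $(x^{2\alpha}u')'=u-f$ you only get $w'\in L^p$; you still need $w\in L^p$. For $0<\alpha<\tfrac12$ the paper observes $\|w\|_{L^2}\le\|x^\alpha u'\|_{L^2}$ (since $x^\alpha\le 1$), whence $w\in W^{1,1}\hookrightarrow L^\infty$. For $\alpha<0$ this fails because $x^\alpha\to\infty$, and there is no direct bound on $w$ from the $X^\alpha$ norm. The paper handles this by introducing an auxiliary solution $g$ of the homogeneous equation with $g(0)=1$, $g(1)=0$ (built from modified Bessel functions), multiplying the equation by $g$, and integrating by parts to obtain $\lim_{x\to 0^+}w(x)=\int_0^1 f g\,dx$; only then does $w(x)=w(0)+\int_0^x w'$ give $w\in L^\infty$. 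Your proof sketch does not supply any substitute for this step, and your later appeal to ``$W^{1,p}\hookrightarrow L^\infty$ applied to $w$'' is circular until $w\in L^p$ has been established.
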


\begin{remark}
	In the above theorem and throughout the rest of this work, the constant \(C>0\) will denote a universal constant depending on the parameters of the problem \(\alpha\) and \(p\), but not on \(f\). Also, the value of \(C\) might change from one line to the next. 
\end{remark}

\begin{remark}\label{obs1}
	Observe that from \cref{Thm-DIR} we have that $(x^{2\alpha}u_{D}')'=2\alpha x^{2\alpha-1}u_{D}' +x^{2\alpha}u_{D}''\in L^p(0,1)$ but it is not necessarily true that both $x^{2\alpha-1}u_{D}'$ and $x^{2\alpha}u_{D}''$ belong to $L^p(0,1)$. This can be seen from the fact that there exists a function $f\in C_c^{\infty}(0,1)$ such that, near the origin, the solution given by \cref{Thm-DIR} expands as
	$$
	u_{D}(x)=a_1x^{1-2\alpha}+a_2x^{3-4\alpha}+a_3x^{5-6\alpha}+\cdots.
	$$
	Therefore $x^{2\alpha-1}u_{D}' \sim x^{2\alpha}u_{D}'' \sim x^{-1} \notin L^p(0,1)$ for any \(1\leq p\leq \infty\).
\end{remark}

\begin{remark} \label{obsnueva}
	The boundary behavior near the origin is optimal in the following sense. If we define
	$$
	K_D(x):=\sup_{\|f\|_{L^p}\leq1}\left|x^{2\alpha-1}u_{D}(x)\right|,
	$$
	there exist $\ve_0>0,\ C_1 >0$, and $C_2>C_1$ such that, for all $0<x\leq \ve_0$ one has
	\[
	C_1\leq K_D(x)\leq C_{2}.
	\]
	This fact will be shown below in \cref{pf-rem1}.
\end{remark}

\subsection{Neumann problem}

If we study \eqref{SL-eq} under the (weighted) Neumann boundary condition \(\lim\limits_{x\to 0^+}x^{2\alpha}u'(x)=0\) we have the following generalization of the results from \cite{CW11-1}.

\begin{theorem}\label{Thm-NEU}
	For \(\alpha<1\) and \(f\in L^p(0,1)\) with \(1\leq p\leq \infty\) there exists a unique function \(u_{N}\in W^{2,p}_{loc}((0,1])\) satisfying \eqref{SL-eq} and the following
	\begin{enumerate}[label=(\roman*)]
		\item \(u_N\in L^p\) with \(\norm{u_N}_{L^p}\leq C \norm{f}_{L^p}\),
		\item \(\lim\limits_{x\to 0^+}x^{2\alpha}u_{N}'(x)=0\), and in fact one has $\lim_{x\to 0^+}x^{2\alpha-1+\frac{1}{p}}u_{N}'(x)=0$,
		\item \(x^{2\alpha}u_{N}'\in W^{1,p}(0,1)\) with \(\norm{x^{2\alpha}u_{N}'}_{W^{1,p}}\leq C\norm{f}_{L^p}\),
		\item\label{rem-p=1} If \(p>1\) then $x^{2\alpha-1}u_{N}'\in L^p(0,1)$ and $x^{2\alpha}u_{N}''\in L^p(0,1)$, with
		\[
		\|x^{2\alpha-1}u_{N}'\|_{L^p}+\|x^{2\alpha}u_{N}''\|_{L^p}\leq C\|f\|_{L^p}.
		\]
	\end{enumerate}
	Additionally we have
	\begin{enumerate}[resume,label=(\roman*)]
		\item $u_{N}(x)\in C^{0,\frac{1}{2}-\alpha}[0,1],$ with $\|u_{N}\|_{C^{0,\frac{1}{2}-\alpha}}\leq C \|f\|_{L^p}$, if \(0<\alpha<1\)
		\item $u_{N}\in W^{1,p}(0,1)$ with $\|u_{N}\|_{W^{1,p}}\leq C\|f\|_{L^p}$ if \(\alpha\leq 0\). 
	\end{enumerate}
\end{theorem}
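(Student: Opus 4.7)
The plan is to first establish existence and uniqueness in a Hilbertian setting, then derive the \(L^p\) a priori bound (i) via a weighted integration by parts, and finally to upgrade progressively to the remaining properties by exploiting the integrated form of the equation together with Hardy-type inequalities.

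\textbf{Existence and uniqueness.} For \(f\in L^2(0,1)\) I would work in the weighted Sobolev space
\[
V=\{u\in L^2(0,1)\colon x^{\alpha}u'\in L^2(0,1),\ u(1)=0\},
\]
equipped with its natural inner product. The bilinear form \(a(u,v)=\int_0^1\pt{x^{2\alpha}u'v'+uv}\dx\) is continuous and coercive on \(V\), so Lax--Milgram produces a unique solution to the associated variational problem; the Neumann condition at \(x=0\) then appears automatically as the natural boundary condition. For \(f\in L^p\) with \(p\neq 2\) I would approximate \(f\) by functions \(f_n\in C_c^{\infty}((0,1])\), solve via the \(p=2\) theory, and pass to the limit using the a priori estimate (i). Uniqueness for each \(p\) follows by applying that same estimate to the difference of two solutions.

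\textbf{A priori bound and central identity.} For (i) with \(1<p<\infty\) I would multiply \eqref{SL-eq} by \(\abs{u_N}^{p-2}u_N\), integrate over \([\ve,1]\), and let \(\ve\to 0^+\): the boundary term at \(1\) vanishes by \(u_N(1)=0\) and the one at \(\ve\) vanishes by the Neumann condition, so the nonnegativity of the resulting weighted gradient term yields \(\norm{u_N}_{L^p}^p\leq\norm{f}_{L^p}\norm{u_N}_{L^p}^{p-1}\). The endpoints \(p=1\) and \(p=\infty\) are covered by duality and by a direct comparison argument, respectively. Integrating the equation from \(0\) to \(x\) using the Neumann condition at the origin produces the key identity
\[
x^{2\alpha}u_N'(x)=\int_0^x\pt{u_N(t)-f(t)}\,\D t,
\]
from which Hölder's inequality gives \(\abs{x^{2\alpha-1+\frac{1}{p}}u_N'(x)}\leq\norm{u_N-f}_{L^p(0,x)}\to 0\), proving (ii); combined with \((x^{2\alpha}u_N')'=u_N-f\in L^p\) this also yields (iii).

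\textbf{Higher regularity and the main obstacle.} For (iv) with \(p>1\), I would rewrite the identity as \(x^{2\alpha-1}u_N'(x)=\tfrac{1}{x}\int_0^x(u_N(t)-f(t))\,\D t\) and apply the classical \(L^p\) Hardy inequality; the remaining term \(x^{2\alpha}u_N''=(x^{2\alpha}u_N')'-2\alpha\,x^{2\alpha-1}u_N'\) then lies in \(L^p\) by algebra. The Hölder regularity (v) with sharp exponent \(\frac{1}{2}-\alpha\) is obtained from a weighted Cauchy--Schwarz applied to \(u_N(x)-u_N(y)=\int_y^x s^{-\alpha}\pt{s^{\alpha}u_N'(s)}\,\D s\), once one has shown that \(\|x^{\alpha}u_N'\|_{L^2}\leq C\|f\|_{L^p}\) using the pointwise bound on \(u_N'\) derived above. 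Property (vi) for \(\alpha\leq 0\) then reduces to (iv), or is immediate from the integral identity, because the weight \(x^{2\alpha}\) is bounded below near the origin in that range. The main obstacle is the endpoint \(p=1\): Hardy's inequality fails there, which explains the explicit restriction \(p>1\) in (iv), and both the \(L^1\) instance of (i) and the justification that \(x^{2\alpha}u_N'(x)\to 0\) as \(x\to 0^+\) must be recovered by separate arguments such as duality and a careful truncation in the approximating sequence.
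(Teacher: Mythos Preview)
Your plan for items (ii), (iii), (iv) and (vi) --- the integrated identity \(x^{2\alpha}u_N'(x)=\int_0^x(u_N-f)\), Hölder's inequality for the boundary limit, and Hardy's inequality for \(x^{2\alpha-1}u_N'\) --- is exactly the route the paper takes in Section~5, so those parts are fine.

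The genuinely different point is your approach to (i). The paper obtains \(\norm{u_N}_{L^p}\leq C\norm{f}_{L^p}\) by a lengthy case analysis: for \(\alpha<\tfrac12\) it uses the embedding \(X^\alpha\hookrightarrow L^\infty\); for \(\tfrac12\leq\alpha<1\) and \(p\) large it runs a bootstrap iteration through the scale of spaces \(X^{2\alpha-1,q}\) and their Sobolev embeddings (Section~5.1.2); and for \(\tfrac12\leq\alpha<1\) with small \(p\) it first proves a dual \(L^\infty\) bound via a full Moser iteration (Proposition~3.1), uses that to construct the solution in \(X^{\alpha,q}_{\cdot 0}\), and then iterates again (Section~5.2). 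Your idea of testing against \(\abs{u}^{p-2}u\) collapses all of this into one line and even gives the sharp constant \(C=1\); moreover it is every bit as ``PDE-flavoured'' as the paper's machinery, so it would generalise to the higher-dimensional problems mentioned in the introduction just as well.

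There is, however, a gap you do not address: for \(\alpha\geq\tfrac12\) one has \(X^\alpha\not\hookrightarrow L^\infty\), so after Lax--Milgram you only know \(u_n\in L^q\) for \(q\leq\tfrac{2}{2\alpha-1}\), and for larger \(p\) neither the admissibility of the test function \(\abs{u_n}^{p-2}u_n\) nor the vanishing of the boundary term \(\ve^{2\alpha}u_n'(\ve)\abs{u_n(\ve)}^{p-1}\) as \(\ve\to0\) is justified. This is precisely the obstruction that forces the paper into its iteration arguments. Your plan can be repaired in two ways: either observe that for \(f_n\in C_c^\infty((0,1])\) the solution near \(0\) is a multiple of the bounded homogeneous solution \(\phi_-\) (since the unbounded one \(\phi_+\) fails to lie in \(X^\alpha\) when \(\alpha\geq\tfrac12\)), hence \(u_n\in L^\infty\); or replace \(\abs{u_n}^{p-2}u_n\) by a truncated version and pass to the limit by monotone convergence. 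Either fix is routine, but one of them needs to be stated. Similarly, your one-word treatment of \(p=1\) by ``duality'' and of \(p=\infty\) by ``comparison'' would need to be made precise; the paper does not treat \(p=1\) by duality for (i) but rather by the same embedding/iteration arguments.
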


\begin{remark}\label{rem-p=12}
	Unlike the Dirichlet problem, each of the terms in the expansion of $(x^{2\alpha}u'(x))'=2\alpha x^{2\alpha-1}u'(x)+x^{2\alpha}u''(x)$ belong to $L^p(0,1)$ when \(u=u_{N}\) and \(p>1\). However, this is no longer true for \(p=1\). We will see this in \cref{rem-p=13} by giving an explicit example of a function \(f\in L^1\) for which \(x^{2\alpha-1}u_N'\notin L^1\).
\end{remark}

\begin{remark} \label{obsN2}
	The boundary behavior $\lim\limits_{x\to 0^+}x^{2\alpha-1+\frac{1}{p}}u_{N}'(x)=0$ is optimal in the following sense: there exists a $\ve_0$ such that and constants \(0<C_1<C_2\) such that
	$$
	C_1\leq K_N(x):=\sup_{\|f\|_{L^p}\leq1}\left|x^{2\alpha-1+\frac{1}{p}}u_{N}'(x)\right|\leq C_2
	$$
	for all $0<x\leq \ve_0$. This fact will be shown below in \cref{pf-rem2}.
\end{remark}

The rest of this article consists on the proof of these results. To make the exposition clear we have divided each proof into the following parts: in \cref{sect-unique} we prove the uniqueness of both types of solutions and then in \cref{sect-existence} we prove the existence part for both \cref{Thm-DIR,Thm-NEU}. Then in \cref{reg-Dir} we prove the regularity properties of the solution \(u_D\) mentioned in \cref{Thm-DIR} to then study the solution \(u_N\) given in \cref{Thm-NEU} in \cref{reg-Neu}. We conclude this paper with some results about Bessel's function in the \cref{app-bessel}.

\section{Uniqueness in both \cref{Thm-DIR,Thm-NEU}}\label{sect-unique}

This was done for \(\alpha>0\) in \cite{CW11-1} and the same argument remains valid for \(\alpha<0\). We include the main steps on each proof for the reader's convenience.

\subsection{The Dirichlet problem}

\begin{proposition}\label{uniqD}
	If $\alpha<\frac12$ and $u\in W^{2,p}_{loc}(0,1)$ for some \(1\leq p\leq\infty\) satisfies
	\begin{equation}\label{eq1D}
		\left\{
		\begin{aligned}
			-(x^{2\alpha}u'(x))'+u(x)&=0 && \text{in } (0,1],\\
			u(1)&= 0,\\
			\lim_{x\to 0^+} u(x)&= 0,
		\end{aligned}
		\right.
	\end{equation}
	then $u\equiv0$.
\end{proposition}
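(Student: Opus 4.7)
The natural test function to use is $u$ itself: multiplying $-(x^{2\alpha}u'(x))'+u(x)=0$ by $u$ and integrating on $[\ve,1]$, integration by parts produces
\begin{equation*}
\ve^{2\alpha}u'(\ve)u(\ve) + \int_\ve^1 x^{2\alpha}u'(x)^2\D x + \int_\ve^1 u(x)^2\D x = 0,
\end{equation*}
since the upper boundary term is killed by $u(1)=0$. If we can send $\ve\to 0^+$ and show the remaining boundary contribution vanishes, the identity reduces to a sum of two non-negative quantities equal to zero, which immediately forces $u\equiv 0$.

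The main obstacle is therefore controlling $\ve^{2\alpha}u'(\ve)u(\ve)$ as $\ve\to 0^+$. Because $u(x)\to 0$ at the origin, $u$ is bounded near $x=0$, so $\int_0^x u(s)\D s$ is well defined for every $x\in(0,1]$. Integrating the equation $(x^{2\alpha}u'(x))'=u(x)$ from $x_0>0$ to $x$ and letting $x_0\to 0^+$, the right-hand side converges, so
\begin{equation*}
L:=\lim_{x\to 0^+}x^{2\alpha}u'(x)\quad \text{exists in } \RR,\qquad x^{2\alpha}u'(x) = L + \int_0^x u(s)\D s.
\end{equation*}
Thus $x^{2\alpha}u'$ extends continuously to $[0,1]$, and $\ve^{2\alpha}u'(\ve)u(\ve)\to L\cdot 0=0$, as desired.

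With the boundary term disposed of, the passage to the limit in the two remaining integrals is routine: both integrands are non-negative, so monotone convergence gives
\begin{equation*}
\int_0^1 x^{2\alpha}u'(x)^2\D x + \int_0^1 u(x)^2\D x = 0,
\end{equation*}
and $u\equiv 0$ follows. A comment on the role of the hypothesis: the argument does not use $\alpha<\tfrac12$ in any step; this restriction is needed only so that the prescription $u(x)\to 0$ at the singular endpoint is a non-degenerate selection condition (for $\alpha\geq\tfrac12$ neither of the two homogeneous solutions of the ODE vanishes at the origin, as discussed in \cite{CW11-1}).
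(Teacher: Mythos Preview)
Your proof is correct and follows essentially the same route as the paper's: both arguments first use the equation and the boundedness of $u$ near $0$ to show that $x^{2\alpha}u'(x)$ extends continuously to $x=0$, then multiply the equation by $u$, integrate by parts on $(\ve,1)$ (the paper uses $(\ve,1-\ve)$, which is immaterial since $u(1)=0$), and let $\ve\to 0^+$ so that the boundary contribution vanishes and the energy identity forces $u\equiv 0$. Your closing remark that $\alpha<\tfrac12$ is not actually invoked in the argument is accurate and a nice observation.
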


\begin{proof}	
	Observe that since \(W^{2,p}_{loc}(0,1)\hookrightarrow C^0(0,1)\) and because $\lim\limits_{x\to 0^+}u(x)=u(1)=0$ we have $u\in C^0([0,1])$. Additionally, for every \(0<x<y<1\) we can write
	$$
	y^{2\alpha}u'(y)-x^{2\alpha}u'(x)=\int_x^y(s^{2\alpha}u'(s))'ds=\int_x^yu(s)ds,
	$$
	so that the function \(x^{2\alpha}u'(x)\) is continuous in \([0,1]\), in particular if we multiply \eqref{eq1D} by $u$ and integrate we have that for all $0<\ve<1$
	\begin{align*}
		0&=-\int_\ve^{1-\ve}(x^{2\alpha}u'(x))'u(x)dx+\int_\ve^{1-\ve}u^2(x)dx\\
		&=\int_\ve^{1-\ve}\abs{x^{\alpha}u'(x)}^2dx+\int_\ve^{1-\ve} u^2(x)dx-x^{2\alpha}u'(x)u(x)\Big|_{x=\ve}^{x=1-\ve}\\
		&\xrightarrow[\ve\to 0]{}\int_0^1\abs{x^{\alpha}u'(x)}^2dx+\int_0^1 u^2(x)dx,
	\end{align*}
	therefore $u=0$ a.e., and because \(u\in C^0([0,1])\) in fact we deduce that \(u\equiv 0\).
	
\end{proof}

\subsection{The Neumann problem}

\begin{proposition}\label{uniqN}
	If $\alpha<1$ and $u\in W^{2,p}_{loc}(0,1)$ for some \(1\leq p\leq\infty\) satisfies
	\begin{equation}
		\left\{
		\begin{aligned}
			-(x^{2\alpha}u'(x))'+u(x)&=0 && \text{in } (0,1],\\
			u(1)&= 0,\\
			\lim_{x\to 0^+} x^{2\alpha}u'(x)&= 0,
		\end{aligned}
		\right.
	\end{equation}
	then $u\equiv0$.
\end{proposition}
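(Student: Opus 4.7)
The plan is to mirror the Dirichlet argument of \cref{uniqD}: multiply the equation by \(u\), integrate on \((\epsilon,1)\), integrate by parts, and let \(\epsilon\to 0^+\). Since \(u\in W^{2,p}_{loc}(0,1)\subset C^1(0,1)\) and the hypothesis gives \(\lim_{x\to 0^+} x^{2\alpha} u'(x)=0\), the function \(v(x):=x^{2\alpha} u'(x)\) extends continuously to \([0,1]\) with \(v(0)=0\) and \(v'=u\) on \((0,1]\) via the equation. Using \(u(1)=0\) to kill the outer boundary term, the integration by parts yields the energy identity
\[
\int_\epsilon^1 x^{2\alpha}(u'(x))^2\,dx + \int_\epsilon^1 u^2(x)\,dx = -v(\epsilon)\,u(\epsilon).
\]

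The heart of the proof is then to show that the right-hand side vanishes as \(\epsilon\to 0\). Since \(v(\epsilon)\to 0\), it suffices to control the growth of \(|u(\epsilon)|\). The strategy uses two representations: the formula \(u(x) = -\int_x^1 v(s)\,s^{-2\alpha}\,ds\) (from \(u'=v/x^{2\alpha}\) and \(u(1)=0\)), and \(v(x) = \int_0^x u(s)\,ds\) (from \(v'=u\) and \(v(0)=0\)). When \(\alpha<\tfrac12\), the first relation together with the boundedness of \(v\) immediately yields \(u\) bounded near \(0\), because \(s^{-2\alpha}\) is integrable there; this already handles the new range \(\alpha<0\) and the easier half of \(\alpha\in(0,\tfrac12)\). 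For \(\tfrac12\leq\alpha<1\), the first pass only delivers \(|u(x)|\leq C\,x^{1-2\alpha}\); plugging this back into the second relation gives \(|v(x)|\leq C\,x^{2-2\alpha}\), and iterating the two estimates one sees that at each step the growth exponent of \(u\) improves by \(2(1-\alpha)>0\), so after finitely many rounds the exponent becomes nonnegative and \(u\) is bounded near \(0\).

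Once \(v(\epsilon)u(\epsilon)\to 0\) is secured, letting \(\epsilon\to 0\) in the energy identity yields \(\int_0^1 [u^2 + x^{2\alpha}(u')^2]\,dx = 0\), and hence \(u\equiv 0\). The main obstacle I expect is precisely the bootstrap step for \(\alpha\in[\tfrac12,1)\): the Cauchy--Schwarz/Hardy-type estimate available for small \(\alpha\) breaks down because \(x^{-2\alpha}\) ceases to be integrable at the origin, and the iteration only closes thanks to the standing hypothesis \(\alpha<1\), which makes the incremental exponent gain \(2(1-\alpha)\) strictly positive.
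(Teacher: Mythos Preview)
Your proposal is correct and follows the same overall strategy as the paper: control the boundary term \(x^{2\alpha}u'(x)\,u(x)\) as \(x\to 0^+\) and then pass to the limit in the energy identity. For \(\alpha<\tfrac12\) your argument and the paper's coincide essentially verbatim.

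For \(\tfrac12\le\alpha<1\) the paper does not actually give a self-contained argument: it remarks that proving \(u\in C^0([0,1])\) ``is similar to what we will do in \cref{ssect-iter}'' and refers to \cite{CW11-1}; that route is a weighted-Sobolev bootstrap, iterating the embeddings of the spaces \(X^{2\alpha-1,q}\) to push the integrability of \(u\) up to \(L^\infty\). Your route is instead a direct pointwise bootstrap on the pair of integral representations \(u(x)=-\int_x^1 v(s)s^{-2\alpha}\,ds\) and \(v(x)=\int_0^x u(s)\,ds\), gaining \(2(1-\alpha)\) in the growth exponent at each pass. This is more elementary and fully self-contained, and it makes transparent exactly where \(\alpha<1\) is used. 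Two small caveats: (i) at \(\alpha=\tfrac12\) (and whenever an intermediate exponent hits \(-1\)) the first pass yields \(|u(x)|\le C|\log x|\) rather than a pure power, so you need one extra round before the exponent becomes nonnegative---your sentence ``the first pass only delivers \(|u(x)|\le C\,x^{1-2\alpha}\)'' is literally false there; (ii) the identity \(v(x)=\int_0^x u(s)\,ds\) requires \(u\in L^1\) near \(0\), which is only available after the first pass (it gives \(|u|\le C s^{1-2\alpha}\) with \(1-2\alpha>-1\), or \(C|\log s|\)), so you should invoke the two representations in that order.
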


\begin{proof}
	Suppose for a moment that \(u\in C^0([0,1])\), if that is the case then just as in the proof of \cref{uniqD} we can write
	\begin{align*}
		0&=-\int_\ve^{1-\ve}(x^{2\alpha}u'(x))'u(x)dx+\int_\ve^{1-\ve}u^2(x)dx\\
		&=\int_\ve^{1-\ve}\abs{x^{\alpha}u'(x)}^2dx+\int_\ve^{1-\ve} u^2(x)dx-x^{2\alpha}u'(x)u(x)\Big|_{x=\ve}^{x=1-\ve}\\
		&\xrightarrow[\ve\to 0]{}\int_0^1\abs{x^{\alpha}u'(x)}^2dx+\int_0^1 u^2(x)dx,
	\end{align*}
	and therefore \(u\equiv 0\).
	
	So we only need to show that \(u\in C^0([0,1])\). Observe that from the equation and the fact that \(u\in W^{2,p}_{loc}(0,1)\) we readily obtain that \(u\in C^1((0,1])\). Additionally since \(\lim\limits_{x\to 0^+}x^{2\alpha}u'(x)=0\) we deduce that \(x^{2\alpha}u'(x)\) is bounded in \([0,1]\), hence
	\[
	\abs{u(y)-u(x)}=\abs{\int_x^yu'(s)ds}\leq C\int_x^ys^{-2\alpha}ds\leq C\abs{y^{1-2\alpha}-x^{1-2\alpha}},
	\] 
	thus \(u\in C^0([0,1])\) for any \(\alpha<\frac12\). To prove that \(u\in C^0([0,1])\) for \(\frac12\leq \alpha<1\) the argument is similar to what we will do in \cref{ssect-iter}. We refer the reader to the proof of \cite[Theorems 1.8 and 1.12]{CW11-1} for the detailed argument for the uniqueness.
\end{proof}

\section{Existence of solutions}\label{sect-existence}

The solutions given by \cref{Thm-DIR,Thm-NEU} can be characterized as follows. For each \(\alpha<1\) we define the space
$$
X^{\alpha}=\left\{\,u\in H^1_{loc}(0,1)\,:\, u \in L^2(0,1)\,\text{and}\, x^{\alpha}u'\in L^2(0,1)\right\},
$$
and equip it with the inner product
$$
(u,v)_{X^\alpha}=\int_0^1\left(x^{2\alpha}u'(x)v'(x)+u(x)v(x)\right)dx,
$$
which makes $X^{\alpha}$ a Hilbert space. Observe that for \(\alpha=0\) then \(X^\alpha=H^1(0,1)\) the classical Sobolev space, where it is clear that if \(\alpha<0\) then \(X^\alpha\hookrightarrow H^1(0,1)\) and if \(\alpha>0\) then \(H^1(0,1)\hookrightarrow X^\alpha\), both inclusions being bounded. Moreover, it is also clear that for any \(\alpha\in \RR\) we have the embedding \(X^\alpha\hookrightarrow H^1_{loc}((0,1])\) and in particular, elements in \(X^\alpha\) are continuous near \(x=1\), therefore it makes sense to define the closed subspace
\[
X^\alpha_{\cdot 0}=\set{u\in X^\alpha(0,1): u(1)=0}
\]
which is a natural space to find weak solutions to \eqref{SL-eq} with (weighted) Neumann type behavior near the origin. Additionally, it is known from \cite[Appendix]{CW11-1} that if \(0<\alpha<\frac12\) then \(X^\alpha\hookrightarrow C^{\frac12-\alpha}([0,1])\) and because  \(X^\alpha\hookrightarrow H^1(0,1)\hookrightarrow C^{\frac12}([0,1])\) for \(\alpha<0\) we can also define the closed subspace
\[
X^\alpha_{00}=\set{u\in X^\alpha: u(1)=u(0)=0}
\]
for any \(\alpha<\frac12\), which gives another natural space to find solutions to \eqref{SL-eq}, this time with Dirichlet boundary behavior near the origin.

With the above in mind we consider weak solutions to \eqref{SL-eq} as solutions to
\begin{equation}\label{w-SL-eq}
	\int_0^1\pt{x^{2\alpha}u'(x)v'(x)+u(x)v(x)}dx=\int_0^1f(x)v(x)dx,\quad\forall\, v\in X,
\end{equation}
where \(X\) is either \(X^\alpha_{\cdot 0}\) or \(X^\alpha_{00}\).

In order to have existence of solutions to \eqref{w-SL-eq}, Riesz's theorem tells us that it would suffice that \(f\) belongs to the dual space of \(X\), in particular, given \(f\in L^p(0,1)\) for \(1\leq p\leq \infty\) we should analyze whether the functional
$$
\begin{array}{rrcl}
	\varphi_f : & X^{\alpha}(0,1)& \longrightarrow & \RR \\
	& v & \longmapsto & \varphi_f(v)=\displaystyle\int_0^1 f(x)v(x)dx
\end{array},
$$
is bounder or not. On the one hand, because \(X^\alpha(0,1)\hookrightarrow L^\infty(0,1)\) for every \(\alpha<\frac12\) it follows that \(\varphi_f\in \pt{X^\alpha}^*\) for every \(1\leq p\leq \infty\) with
\[
\norm{\varphi_f}_{\pt{X^\alpha}^*}\leq C\norm{f}_{L^p},
\]
and on the other hand for \(\alpha\geq \frac12\) we know from \cite{CW11-1} that
\begin{itemize}
	\item If \(\alpha=\frac12\) then \(X^{\alpha}\hookrightarrow L^q(0,1)\) for \(1\leq q<\infty\),
	\item If \(\frac12<\alpha\leq 1\) then \(X^\alpha\hookrightarrow L^q(0,1)\) for \(1\leq q\leq \frac{2}{2\alpha-1}\)
\end{itemize}
from where we deduce that
\begin{itemize}
	\item If \(\alpha=\frac12\) then \(\varphi_f\in \pt{X^{\alpha}}^*\) for \(1<p\leq \infty\),
	\item If \(\frac12<\alpha\leq 1\) then \(\varphi_f\in \pt{X^{\alpha}}^*\) for \(\frac{2}{3-2\alpha}\leq p\leq \infty\).
\end{itemize}

Therefore, either for \(\alpha<\frac12\) and every \(1\leq p\leq \infty\) or for \(\frac12\leq \alpha<1\) and \(p\) as above, we readily have the existence of weak solutions to \eqref{w-SL-eq} in either \(X^\alpha_{00}\) or \(X^\alpha_{\cdot 0}\) thus proving the existence part of \cref{Thm-DIR} and part of the existence for \cref{Thm-NEU}. However to handle the existence part for \cref{Thm-NEU} in each of the remaining cases we will have to look for solutions elsewhere.

For \(1\leq q\leq \infty\) we consider the space
$$
X^{\alpha,q}=\left\{\,u\in W^{1,q}_{loc}(0,1)\,:\, u \in L^q(0,1)\,\text{and}\, x^{\alpha}u'\in L^q(0,1)\right\},
$$
which is a reflexive Banach space when \(1<q<\infty\) when equipped with the norm
\[
\norm{u}_{X^{\alpha,q}}^q=\norm{x^\alpha u'}_{L^q}^q+\norm{u}_{L^q}^q.
\]
Additionally we consider its closed subspace
\[
X^{\alpha,q}_{\cdot 0}=\set{u\in X^{\alpha,q}(0,1): u(1)=0},
\]
and equip it with the equivalent norm (thanks to \cite[Theorem A.2]{CW11-1})
\[
\norm{u}_{X^{\alpha,q}_{\cdot 0}}=\norm{x^\alpha u'}_{L^q}.
\]

\subsection{Existence part of \cref{Thm-NEU}: the case \(\frac12\leq\alpha<1\) and \(1\leq p<\frac{2}{3-2\alpha}\)}

To deal with this case we will argue by duality with the help of the following

\begin{proposition}\label{dual-prop}
	Suppose that \(\frac12\leq\alpha<1\) and let \(g\in L^s(0,1)\) with \(s>\frac{1}{1-\alpha}\). Then there exists \(w\in X^\alpha_{\cdot 0}\) solution to
	\begin{equation}\label{dual-eq}
		-(x^{2\alpha}w'(x))'+w(x)=-(x^\alpha g(x))'\qquad \text{in }(X^\alpha_{\cdot 0})^*,
	\end{equation}
	satisfying in addition \(\norm{w}_{L^\infty}\leq C\norm{g}_{L^s}\).
\end{proposition}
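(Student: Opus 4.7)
The plan is to produce $w$ through a Hilbert-space argument on $X^\alpha_{\cdot 0}$, and then upgrade the resulting energy estimate to the $L^\infty$ bound by a Stampacchia truncation. I would first recast \eqref{dual-eq} in the weak form
\[
\int_0^1\pt{x^{2\alpha}w'(x)v'(x)+w(x)v(x)}dx=\int_0^1 x^\alpha g(x)v'(x)\,dx,\quad \forall v\in X^\alpha_{\cdot 0},
\]
which is the natural pairing of $-(x^\alpha g)'\in \pt{X^\alpha_{\cdot 0}}^*$ with test functions. The first thing to check is boundedness of the right-hand side: by H\"older $\abs{\int_0^1 x^\alpha g v'}\leq \norm{g}_{L^s}\norm{x^\alpha v'}_{L^{s'}}$, and since the hypothesis $s>\frac{1}{1-\alpha}\geq 2$ (this is where $\alpha\geq \frac12$ is used) yields $s'\leq 2$, the inclusion $L^2(0,1)\hookrightarrow L^{s'}(0,1)$ dominates $\norm{x^\alpha v'}_{L^{s'}}$ by the $X^\alpha_{\cdot 0}$-norm of $v$. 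Riesz's representation theorem on the Hilbert space $X^\alpha_{\cdot 0}$ then produces a unique $w\in X^\alpha_{\cdot 0}$ solving the weak equation, together with the energy bound $\norm{w}_{X^\alpha_{\cdot 0}}\leq C\norm{g}_{L^s}$.

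For the $L^\infty$ estimate I would truncate. Since $w(1)=0$, for every $k>0$ the function $(w-k)^+$ lies in $X^\alpha_{\cdot 0}$ and is an admissible test function. Denoting $A(k):=\{x\in(0,1):w(x)>k\}$, inserting $(w-k)^+$ into the weak formulation, using that $w(w-k)^+\geq ((w-k)^+)^2$ pointwise on $A(k)$, and applying Cauchy--Schwarz on the right-hand side leads to $\norm{(w-k)^+}_{X^\alpha}\leq \pt{\int_{A(k)} g^2\,dx}^{1/2}$. A further H\"older (with exponents $s/2$ and $s/(s-2)$, which requires $s>2$) gives $\norm{(w-k)^+}_{X^\alpha}\leq \norm{g}_{L^s}\abs{A(k)}^{1/2-1/s}$. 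Invoking the embedding $X^\alpha\hookrightarrow L^q(0,1)$ recalled in the introduction ---admissible for any $q<\infty$ when $\alpha=\frac12$, and for $q=\frac{2}{2\alpha-1}$ when $\frac12<\alpha<1$--- together with the elementary comparison $(h-k)^q\abs{A(h)}\leq \norm{(w-k)^+}_{L^q}^q$ for $h>k$, I arrive at the level-set inequality
\[
\abs{A(h)}\leq \frac{C\norm{g}_{L^s}^q}{(h-k)^q}\abs{A(k)}^{\beta},\qquad \beta:=q\pt{\tfrac{1}{2}-\tfrac{1}{s}}.
\]

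The classical Stampacchia lemma will then force $\abs{A(k_0)}=0$ for some $k_0\leq C\norm{g}_{L^s}$, \emph{provided} $\beta>1$. This is exactly where the hypothesis $s>\frac{1}{1-\alpha}$ enters: for $\alpha=\frac12$ any sufficiently large $q$ works, while for $\frac12<\alpha<1$ the admissible choice $q=\frac{2}{2\alpha-1}$ gives $\beta=\frac{1-2/s}{2\alpha-1}$, and the condition $\beta>1$ becomes equivalent to $s>\frac{1}{1-\alpha}$. Consequently $w\leq C\norm{g}_{L^s}$ a.e., and repeating the argument with $-w$ (which solves the analogous equation with $g$ replaced by $-g$) produces the matching lower bound, completing $\norm{w}_{L^\infty}\leq C\norm{g}_{L^s}$. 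The main delicate point is the matching between the Sobolev exponent $q$ attainable in $X^\alpha$ (which degenerates as $\alpha\uparrow 1$) and the integrability exponent $s$; once one recognizes $s>\frac{1}{1-\alpha}$ as the sharp condition making $\beta>1$, the rest of the truncation scheme is routine.
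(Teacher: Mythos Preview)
Your argument is correct and complete: Riesz gives existence with the energy bound, and your Stampacchia-type truncation with $(w-k)^+$ as test function produces the level-set recursion $|A(h)|\leq C\norm{g}_{L^s}^q(h-k)^{-q}|A(k)|^\beta$; the verification that $\beta=q(\tfrac12-\tfrac1s)>1$ follows exactly from $s>\frac{1}{1-\alpha}$ as you observe, and the classical lemma then forces $|A(k_0)|=0$ at height $k_0\leq C\norm{g}_{L^s}$.

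The paper takes a genuinely different route for the $L^\infty$ bound: a Moser--Serrin iteration. It chooses test functions $\varphi=\eta^2 G(w)$ built from a $C^1$ power-type function $F$ and cut-offs $\eta_n$ supported in a shrinking family of intervals $[0,\tfrac12+2^{-n-1})$, derives a reverse-H\"older type inequality of the form $\norm{\eta \overline w^m}_{L^{2\chi}}\leq C m^{L/\ve}\norm{\overline w^m}_{L^2(I_n)}$ (with $\chi=\frac{1}{2\alpha-1}$ for $\alpha>\tfrac12$, or $\chi=\frac4\ve$ for $\alpha=\tfrac12$), and iterates with $m=\chi^n$ to bound $\norm{w}_{L^\infty(0,1/2)}$; the bound on $[\tfrac14,1]$ comes separately from $X^\alpha\hookrightarrow H^1(\tfrac14,1)\hookrightarrow L^\infty$. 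Your approach is shorter and avoids both the cut-off machinery and the local-to-global patching, because the truncations $(w-k)^+$ automatically inherit the boundary condition at $x=1$ and the Sobolev embedding is applied globally. The Moser scheme, on the other hand, is more robust in the sense that it would survive if the zero-order term $+w$ were replaced by something less favorable (or absent), since it does not rely on the sign $w(w-k)^+\geq((w-k)^+)^2$; for the present linear equation with a coercive lower-order term, your Stampacchia argument is the cleaner choice.
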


Before proving this proposition let us use it to analyze the existence part of \cref{Thm-NEU} for case \(1\leq p<\frac{2}{3-2\alpha}\) and \(\frac12\leq \alpha<1\). Take $f \in L^p(0,1)$ and since \(\frac{2}{3-2\alpha}<2\) we can construct a sequence $f_n\in L^2(0,1) $ such that \(f_n\xrightarrow[]{n\to\infty}f\) strongly in \(L^p\). From the \(L^2\) theory at the beginning of this section we know that there exists $u_n \in X^{\alpha}_{\cdot 0}$ weak solution to \eqref{SL-eq} with \(f_n\) as the right hand side. We claim that \(u_n\) converges to some \(u\) solution of \eqref{SL-eq} with \(f\) as its right hand side. Indeed, fix \(s>\frac1{1-\alpha}\) and consider arbitrary \(g\in L^s\) with \(w=w_g\) the solution given by \cref{dual-prop}. We then use \(\psi=u_n-u_m\) as a test function in \eqref{dual-eq}, that is in
 \[
\int_0^1 x^{2\alpha}w'(x)\psi'(x) dx + \int_0^1 w(x)\psi(x)dx = \int_0^1 x^{\alpha}g(x)\psi'(x)dx
\]
and we observe that thanks to \cref{dual-prop} we have

\begin{align*}
	\left| \int_0^1 x^{\alpha}\psi'(x)g(x)dx\right| &=\left|\int_0^1 x^{2\alpha}w'(x)\psi'(x) dx + \int_0^1 w(x)\psi(x)dx\right| \\
	&= \left|\int_0^1 (-(x^{2\alpha}\psi'(x))'+\psi(x)) w(x) dx\right|  \\
	& \leq  \left\| w\right\|_{L^\infty} \left\| -(x^{2\alpha}\psi')'+\psi\right\|_{L^p}  \\
	& \leq  C \left\| g\right\|_{L^s} \left\| f_n - f_m\right\|_{L^p}.
\end{align*}

Therefore
\begin{equation} \label{L1}
	\left\| x^{\alpha}\psi'\right\|_{L^{s'}}
	= \sup_{ \left\| g\right\|_{L^{s}}=1} \left| \int_0^1 x^{\alpha}\psi'(x)g(x)dx\right|
	\leq C\left\| f_n - f_m\right\|_{L^p},
\end{equation} 
and in particular the sequence \((u_n)\) is a Cauchy sequence in \(X^{\alpha,s'}_{\cdot 0}\) and therefore we can find $u\in X^{\alpha,s'}_{\cdot 0}$ such that $(u_n)$ converges to $u$ in $X^{\alpha,s'}_{\cdot 0}$. Furthermore we can pass to the limit in
\[
\int_0^1 x^{2\alpha}u_n'(x)v'(x)dx+\int_0^1 u_n(x)v(x)dx=\int_0^1 f_n(x)v(x)dx \qquad \forall\, v\in C^1_c([0,1))
\]
to deduce that $-(x^{2\alpha}u')' + u = f$ a.e. in \((0,1)\) and that
\[
\left\| u\right\|_{X^{\alpha,s'}_{\cdot 0}} \leq C_{\alpha,1} \left\| f\right\|_{L^p}.
\]
Since \(s>\frac1{1-\alpha}\) was arbitrary we conclude that in fact \(u\in X^{\alpha,q}_{\cdot 0}\) for any \(1\leq q<\frac1\alpha\) with
\[
\left\| u\right\|_{X^{\alpha,q}_{\cdot 0}} \leq C\left\| f\right\|_{L^p},
\]
and in particular \(\norm{u}_{L^q}\leq C\norm{f}_{L^p}\).

\begin{proof}[Proof of \cref{dual-prop}]
	Observe that for \(\alpha\geq \frac12\) it holds that \(\frac{1}{1-\alpha}\geq 2\), in particular if \(g\in L^s(0,1)\) for \(s>\frac1{1-\alpha}\) then \(g\in L^2(0,1)\). Therefore the functional \(\varphi\mapsto \int_0^1 x^{\alpha}g(x)\varphi'(x)dx\) is a well defined element of the dual space of \(X^\alpha_{\cdot 0}\) and Riesz's theorem guarantees the existence and uniqueness of a function \(w\in X^\alpha_{\cdot 0}\) such that \begin{equation}\label{iter1}
		\int_0^1 x^{2\alpha}w'(x)\varphi'(x) dx + \int_0^1 w(x)\varphi(x)dx = \int_0^1 x^{\alpha}g(x)\varphi'(x)dx,\qquad\forall\, \varphi\in X_{\cdot 0}^{\alpha}
	\end{equation}
	and
	\[
	\norm{w}_{L^2}\leq \norm{w}_{X^\alpha_{\cdot 0}}\leq \norm{g}_{L^2}\leq \norm{g}_{L^s}.
	\]
	
	So what remains to be shown is that in fact \(w\in L^\infty(0,1)\) with 
	\[
	\norm{w}_{L^\infty}\leq C\norm{g}_{L^s}.
	\]
	For this purpose we write \(s=\frac{2}{(2-\ve)(1-\alpha)}\) for some \(0<\ve<2\) and to prove the boundedness of the solution \(w\) we will use Moser's iteration method as in \cite{Serrin1964}, that is for $m\geq 1$ and $0\leq k \leq l$ we define $F:[k, \infty) \rightarrow \mathbb{R}$ and we consider 
	\begin{equation}
		F(x)=F_{m,k,l}(x)=\begin{dcases}
			x^m &\text{if } k\leq x\leq l,\\
			l^{m-1}(mx-(m-1)l)& \text{if } x > l.
		\end{dcases} 
	\end{equation}
	Observe that $F\in C^1[k,\infty)$ with $|F'(x)|\leq m l^{m-1}$ and that if we define $G:\mathbb{R}\rightarrow \mathbb{R}$ as
	\begin{equation}
		G(x)= \text{sign}(x)\left(F(\overline{x})|F'(\overline{x})|-mk^{\beta}\right)
	\end{equation}
	where $\beta=2m-1$ and $\overline{x}=|x|+k$. Observe that $F$ and $G$ satisfy
	\begin{gather*}
		|G|\leq F(\overline{x})|F'(\overline{x})|,\\
		\overline{x}F'(\overline{x})\leq m F(\overline{x}),\\
		G'(x)=\begin{dcases}
			\frac{\beta}{m} |F'(\overline{x})|^2&\text{if } |x|<l-k,\\
			|F'(\overline{x})|^2& \text{if } |x|>l-k.
		\end{dcases}
	\end{gather*}
	
	To continue, we take $\eta \in C_c^{\infty}([0,1))$ and observe that $\varphi = \eta^2 G(w)$ belongs to \(X^\alpha_{\cdot 0}\) so it is a valid test function in \eqref{iter1}. Since \(0\leq x^\alpha\leq 1\) in \((0,1)\) we have
	\begin{align*}
		\pt{x^{2\alpha}w'-x^{\alpha}g}\varphi'+w\varphi
		&=\eta^2 G'(w)\pt{x^{2\alpha}w'-x^{\alpha}g}
		w'+2\eta G(w)\eta'\pt{x^{2\alpha}w'-x^{\alpha}g}\\
		&\qquad +\eta^2 G(w)w \\
		&\geq \eta^2 G'(w)\left(\frac{1}{2}\abs{x^\alpha w'}^2-\frac{1}{k^2}\abs{g}^2\overline{w}^2\right)\\
		&\qquad-2\eta\abs{\eta' G(w)}\pt{x^{2\alpha}|w'|+\frac{1}{2k}\abs{g}\overline{w}}\\
		&\qquad -\eta^2 |G(w)|\overline{w}\\
		&=\frac{1}{2}\eta^2 G'(w)\abs{x^{\alpha}w'}^2-2\eta |G(w)\eta'|x^{2\alpha}|w'|\\
		&\qquad -\frac{2}k\eta |G(w)\eta'|\abs{g}\overline{w}\\
		&\qquad -\pt{\frac{1}{2k^2}\abs{g}^2\eta^2 G'(w)\overline{w}^2+\eta^2 |G(w)|\overline{w}}\\
		&\geq\frac{1}{2}\abs{x^{\alpha}\eta F'(\overline{w})w'}^2-2|\eta' F(\overline{w})|\,|x^\alpha\eta F'(\overline{w})w'|\\
		&\qquad -\frac2k\abs{g}|\eta' F(\overline{w})|\,|\eta F'(\overline{w})\overline{w}|\\
		&\qquad -\pt{\frac{\beta}{2m k^2}\abs{g}^2|\eta \overline{w}F'(\overline{w})|^2+|\eta F(\overline{w})|\,|\eta F'(\overline{w})\overline{w}|}\\
		&\geq\frac{1}{2}\abs{x^{\alpha}\eta F'(\overline{w})w'}^2-2|\eta' F(\overline{w})|\,|x^\alpha\eta F'(\overline{w})w'|\\
		&\qquad -\frac{2m}k\abs{g}|\eta' F(\overline{w})|\,|\eta F(\overline{w})|\\
		&\qquad -m\pt{\frac{\beta}{2k^2}\abs{g}^2+1}|\eta F(\overline{w})|^2
	\end{align*}
	By denoting $v=F(\overline{w})$ the above can be written as
	\begin{multline*}
	\pt{x^{2\alpha}w'-x^{\alpha}g}\varphi'+w\varphi
	\geq
	\frac{1}{2}\abs{x^{\alpha}\eta v'}^2
	-2|\eta' v|\,|x^\alpha\eta v'|
	-\frac{2m}k\abs{g}|\eta' v|\,|\eta v|\\
	-m\beta\pt{\frac{1}{2k^2}\abs{g}^2+1}|\eta v|^2
	\end{multline*}
	because \(\beta=2m-1\geq 1\) since \(m\geq 1\). If we integrate the above and we use \eqref{iter1} we obtain
	\begin{multline}\label{base-ineq}
		\int_0^1|\eta v'|^2 x^{2\alpha} dx \leq 4\int_0^1|\eta' v|\,|x^\alpha\eta v'|dx
		+\frac{4m}k\int_0^1\abs{g}|\eta' v|\,|\eta v|dx\\
		+2m\beta\int_0^1\pt{\frac{1}{2k^2}\abs{g}^2+1}|\eta v|^2dx
	\end{multline}
	
	To continue we divide the analysis into two cases \(\frac12<\alpha<1\) and \(\alpha=\frac12\). Firstly we consider \(\frac12<\alpha<1\) and we note that by our hypotheses the following estimates holds
	$$
	\int_0^1 |\eta' v|\,|x^{\alpha}\eta v'|dx\leq \|\eta' v\|_{L^2}\|x^{\alpha}\eta v'\|_{L^2},
	$$
	and if we select \(k=\norm{g}_{L^s}\) and write $2^*=\frac{2}{2\alpha-1}$ then by using Hölder's inequality for \(1=\frac{1}{r}+\frac12+\frac1{2^*}\) and \cite[Theorem A.2]{CW11-1} we obtain
	\begin{align*}
		\frac1k\int_0^1 \abs{g}|\eta' v|\,|\eta v|dx&\leq\frac1k\|g\|_{L^{r}} \|\eta' v\|_{L^2}\|\eta v\|_{L^{2^*}}\\
		&\leq\frac1k\|g\|_{L^{\frac1{1-\alpha}}} \|\eta' v\|_{L^2}\|\eta v\|_{L^{2^*}}\\
		&\leq \frac{C}k\|g\|_{L^{s}}\|\eta' v\|_{L^2}\left(\|x^{\alpha}\eta' v\|_{L^2}+\|x^{\alpha}\eta v'\|_{L^2}\right)\\
		& \leq C\pt{\|\eta' v\|_{L^2}^2+\|\eta' v\|_{L^2}\|x^{\alpha}\eta v'\|_{L^2}}.
	\end{align*}
	If we write \(\tilde g=1+\frac{\abs{g}^2}{2k^2}\) then
	\begin{align*}
		\int_0^1 \tilde g|\eta v|^2 dx&
		\leq\|\tilde g\|_{L^{\frac{s}2}}\|\eta v\|_{L^2}^{\ve}\,\|\eta v\|_{L^{2^*}}^{2-\ve}\\
		& \leq C\left(1+\norm{\frac{g}{k}}_{L^{s}}^s\right)^{\frac2s} \|\eta v\|_{L^2}^{\ve}\left(\|\eta' v\|_{L^2}^{2-\ve}+\|x^{\alpha}\eta v'\|_{L^2}^{2-\ve}\right)\\
		&\leq C \|\eta v\|_{L^2}^{\ve}\left(\|\eta' v\|_{L^2}^{2-\ve}+\|x^{\alpha}\eta v'\|_{L^2}^{2-\ve}\right).
	\end{align*}
	Therefore we reach the following
	\begin{multline}\label{end-ineq}
	\|x^{\alpha}\eta v'\|_{L^2}^2\leq C \left(\|\eta' v\|_{L^2}\|x^{\alpha}\eta v'\|_{L^2}+m(\|\eta' v\|_{L^2}^2+\|\eta' v\|_{L^2}\|x^{\alpha}\eta v'\|_{L^2})\right.\\
	\left.+2m\beta \|\eta v\|_{L^2}^{\ve}\left(\|\eta' v\|_{L^2}^{2-\ve}+\|x^{\alpha}\eta v'\|_{L^2}^{2-\ve}\right) \right).
	\end{multline}
	
	Now we take $z=\frac{\|x^{\alpha}\eta v'\|_{L^2}}{\|\eta' v\|_{L^2}}$ and $\zeta=\frac{\|\eta v\|_{L^2}}{\|\eta v'\|_{L^2}}$ so the above can be written as 
	$$z^2\leq C(z+m(1+z)+2m(2m-1)\zeta^{\ve}(1+z^{2-\ve})),$$
	and thanks to \cite[Lemma 2]{Serrin1964} we obtain 
	$$z\leq C m^{\frac{2}{\ve}}(1+\zeta),$$
	which translates to
	$$\|x^{\alpha}\eta v'\|_{L^2} \leq  C m^{\frac{2}{\ve}}\left(\|\eta' v\|_{L^2} + \|\eta v\|_{L^2}\right),$$
	and because \(\abs{(\eta v)'}\leq \abs{\eta v'}+\abs{\eta' v}\) the above also gives
	$$\|x^{\alpha}(\eta v)'\|_{L^2} \leq  C m^{\frac{2}{\ve}}\left(\|\eta' v\|_{L^2} + \|\eta v\|_{L^2}\right).$$
	Hence we can use \cite[Theorem A.2]{CW11-1} and deduce that
	\begin{equation}\label{ineq1}
		\left(\int_0^1|\eta v|^{2\chi} dx\right)^{\frac{1}{2\chi}}\leq C m^{\frac{2}{\ve}}\left(\left(\int_0^1|\eta v|^2 dx\right)^{\frac{1}{2}}+ \left(\int_0^1|\eta' v|^2 dx\right)^{\frac{1}{2}} \right),
	\end{equation}
	where $\chi=\frac{1}{2\alpha-1}$.
	
	If \(\alpha=\frac12\) then \(s=\frac{4}{2-\ve}\). We start from \eqref{base-ineq} and estimate the terms on the right hand side as follows: we still have
	$$
	\int_0^1 |\eta' v|\,|x^{\alpha}\eta v'|dx\leq \|\eta' v\|_{L^2}\|x^{\alpha}\eta v'\|_{L^2},
	$$
	but for the second term we recall that \(\norm{u}_{L^t}\leq C\norm{x^\alpha u'}_{L^2}\) for any \(t<\infty\) when \(\alpha=\frac12\), hence we can write
	\begin{align*}
		\frac1k\int_0^1 \abs{g}|\eta' v|\,|\eta v|dx&\leq\frac1k\|g\|_{L^{s}} \|\eta' v\|_{L^{\frac{8}{4+\ve}}}\|\eta v\|_{L^{\frac{8}{\ve}}}\\
		&\leq \frac{C}k\|g\|_{L^{s}}\|\eta' v\|_{L^2}\left(\|x^{\alpha}\eta' v\|_{L^2}+\|x^{\alpha}\eta v'\|_{L^2}\right)\\
		& \leq C\|\eta' v\|_{L^2}^2+\|\eta' v\|_{L^2}\|x^{\alpha}\eta v'\|_{L^2},
	\end{align*}
	and for the third term we obtain
	\begin{align*}
		\int_0^1 \tilde g|\eta v|^2 dx&
		\leq\|\tilde g\|_{L^{\frac{s}2}}\|\eta v\|_{L^{\frac{8}{4+\ve}}}^{\frac{\ve}2}\,\|\eta v\|_{L^{\frac{8}{\ve}}}^{2-\frac{\ve}2}\\
		& \leq C\left(1+\norm{\frac{g}{k}}_{L^{s}}^s\right)^{\frac2s} \|\eta v\|_{L^2}^{\frac{\ve}2}\left(\|\eta' v\|_{L^2}^{2-\frac{\ve}2}+\|x^{\alpha}\eta v'\|_{L^2}^{2-\frac{\ve}2}\right)\\
		&\leq C \|\eta v\|_{L^2}^{\frac{\ve}2}\left(\|\eta' v\|_{L^2}^{2-\frac{\ve}2}+\|x^{\alpha}\eta v'\|_{L^2}^{2-\frac{\ve}2}\right).
	\end{align*}
	so by repeating what we did for the case \(\frac12<\alpha<1\) we reach \eqref{end-ineq} but with \(\frac\ve2\) instead of \(\ve\), so instead of \eqref{ineq1} we reach
	\begin{equation}\label{ineq2}
		\left(\int_0^1|\eta v|^{2\chi} dx\right)^{\frac{1}{2\chi}}\leq C m^{\frac{4}{\ve}}\left(\left(\int_0^1|\eta v|^2 dx\right)^{\frac{1}{2}}+ \left(\int_0^1|\eta' v|^2 dx\right)^{\frac{1}{2}} \right),
	\end{equation}
	for \(\chi=\frac4\ve>1\).
	
	In summary, for any \(\frac12\leq \alpha<1\) we have that
	\begin{equation}\label{ineq3}
		\left(\int_0^1|\eta v|^{2\chi} dx\right)^{\frac{1}{2\chi}}\leq C m^{\frac{L}{\ve}}\left(\left(\int_0^1|\eta v|^2 dx\right)^{\frac{1}{2}}+ \left(\int_0^1|\eta' v|^2 dx\right)^{\frac{1}{2}} \right).
	\end{equation}
	where \(L=2\) or \(L=4\) and the appropriate \(\chi>1\).
	
	We now proceed to select the cut-off function \(\eta\). For each \(n\in\NN\cup\set{0}\) we write $I_n=[0,\frac{1}{2}+2^{-n-1})$ and take $\eta_n \in C_c^{\infty}(I_n)$ in such a way that $\eta_n \equiv 1 $ in $I_{n+1}$, and that $|\eta_n'|\leq C 2^n$. If we use such \(\eta_n\) in \eqref{ineq3} and we pass to the limit $l\to \infty$ we deduce that
	$$
	\left(\int_{I_{n+1}}|\overline{w}|^{2m\chi} dx\right)^{\frac{1}{2m\chi}}\leq C^{\frac1m} 2^{\frac{n}m} m^{\frac{L}{m\ve}}\left(\int_{I_n}|\overline{w}|^{2m} dx\right)^{\frac{1}{2m}},
	$$
	for each $m\geq 1$. In particular, if we take $m_n = \chi^n$ and \(s_n=2\chi^{n}\) we obtain 
	$$
	\left(\int_{I_{n+1}}|\overline{w}|^{s_{n+1}} dx\right)^{\frac{1}{s_{n+1}}}\leq C^{\chi^{-n}} 2^{n \chi^{-n}} \chi^{\frac{L}{\ve} \chi^{-n}}\left(\int_{I_n}|\overline{w}|^{s_{n}} dx\right)^{\frac{1}{s_n}}.
	$$
	Since $\chi>1$ we know that both $\sum_{k=0}^{\infty} k \chi^{-k}$ and $\sum_{k=0}^{\infty}  \chi^{-k}$ converge, therefore after iterating the above inequality we have
	$$
	\left(\int_{I_{n+1}}|\overline{w}|^{s_{n+1}} dx\right)^{\frac{1}{s_{n+1}}}\leq C \left(\int_0^1|\overline{w}|^{2} dx\right)^{\frac{1}{2}},
	$$
	for some constant \(C\) independent of \(n\). Finally, by passing to the limit $n\to \infty$ we have
	\begin{equation}
		\|w\|_{L^{\infty}(0, \frac{1}{2})} \leq C \pt{\norm{w}_{L^2}+k},
	\end{equation}
	and since we already know that \(\norm{w}_{L^2}\leq \norm{w}_{X^\alpha}\leq C\norm{g}_{L^s}\) we deduce
	$$
	\left\| w\right\|_{L^\infty(0,\frac12)} \leq C \left\| g\right\|_{L^s(0,1)}
	$$
	and the proof is completed if we notice that we already know that
	$$
	\left\| w\right\|_{L^\infty(\frac14,1)} \leq C\left\| g\right\|_{L^s(0,1)}
	$$
	from the fact that \(X^{\alpha}(0,1)\hookrightarrow H^1(\frac14,1)\hookrightarrow L^\infty(\frac14,1)\).
\end{proof}

\section{Analysis of \(u_D\in X^\alpha_{00}\)}\label{reg-Dir}

As we established in \cref{sect-existence,sect-unique} for any \(\alpha<\frac{1}{2}\) and $f\in L^p(0,1)$ with $1\leq p\leq \infty$ there exists a unique $u_D\in X_{00}^{\alpha}$ satisfying \eqref{SL-eq} such that
\[
\norm{u_D}_{X^{\alpha}}=\norm{\varphi_f}_{\pt{X^\alpha}^*}\leq C\norm{f}_{L^p}.
\]
Throughout the rest of this section and to ease the notation we will denote by \(u\) the solution with the above properties.

\subsection{Regularity}
Firstly observe that because $X^{\alpha}\hookrightarrow C[0,1]\subseteq L^p(0,1)$ for \(\alpha<\frac12\) we also have $\norm{u}_{L^p}\leq C\norm{f}_{L^p}$, and what remains to be proven is that \(u\in W^{2,p}_{loc}((0,1])\), \(u\) satisfies \eqref{SL-eq} a.e. and the following properties:
\begin{enumerate}[label=(\roman*)]
	\item\label{tag1}  $x^{2\alpha}u'\in W^{1,p}(0,1)$ with $\|x^{2\alpha}u'\|_{W^{1,p}}\leq C \|f\|_{L^p},$
	\item\label{tag2} $x^{2\alpha-1}u \in W^{1,p}$ with $\|x^{2\alpha-1}u\|_{W^{1,p}}\leq C \|f\|_{L^p},$
	\item\label{tag3} $x^{2\alpha}u\in W^{2,p}(0,1)$ with $\|x^{2\alpha}u\|_{W^{2,p}}\leq C \|f\|_{L^p}.$
	\item\label{tag4} $u\in C^{0,\frac{1}{2}-\alpha}[0,1],$ with $\|u\|_{C^{0,\frac{1}{2}-\alpha}}\leq C \|f\|_{L^p}$ when \(0<\alpha<\frac12\).
	\item\label{tag5} $\lim\limits_{x\to 0^+} x^{2\alpha-1+\frac1p}u(x)=0,$ for $2\alpha+\frac1p<1$ and \(1\leq p\leq\infty\).
\end{enumerate}

To do the above, firstly observe that by taking $v\in C_c^{\infty}(0,1)\subseteq X^\alpha_{00}$ in \eqref{w-SL-eq} we have that $w'(x)=u(x)-f(x)$ a.e. where $w(x)=x^{2\alpha}u'(x)$, that is \eqref{SL-eq} is satisfied a.e. in (0,1). Since we already know that \(u\in L^p\), the previous observation tells us that 
\[
\norm{w'}_{L^p}\leq \norm{u}_{L^p}+\norm{f}_{L^p}\leq C\norm{f}_{L^p}.
\]

In order to continue, we divide the analysis into two cases: \(0<\alpha<\frac12\) and \(\alpha<0\):

\subsubsection{The case \(0<\alpha<\frac12\)} Since \(\alpha>0\) and \(u\in X^\alpha\) we deduce that \(w=x^\alpha\cdot x^\alpha u'\) satisfies
\[
\norm{w}_{L^1}\leq \|w\|_{L^2}\leq \|x^{\alpha}u'\|_{L^2}\leq \|u\|_{X^\alpha}\leq C\|f\|_{L^p},
\]
and since $w'\in L^p(0,1)$ we conclude that $w\in W^{1,1}(0,1)$ with
\[
\norm{w}_{W^{1,1}}\leq C\norm{f}_{L^p}.
\]
But because $W^{1,1}(0,1)\hookrightarrow L^p(0,1)$ continuously, we deduce that $\|w\|_{L^p}\leq C\|f\| _{L^p}$ and as a consequence \(w\in W^{1,p}(0,1)\) with $\|w\|_{W^{1,p}}\leq C\|f\|_{L^p}$ which proves \cref{tag1}.

Additionally, because \(w\in W^{1,p}(0,1)\hookrightarrow C([0,1])\) and because \(\alpha<\frac12\) we have
\[
\lim_{s\to 0^+} su'(s)= \lim_{s\to 0^+} s^{1-2\alpha}w(s)=0,
\]
from where we can write the identity
\[
x^{2\alpha-1}u(x)=\frac{x^{2\alpha}u'(x)}{1-2\alpha }+\frac{x^{2\alpha-1}}{2\alpha-1}\int_0^x(s^{2\alpha}u'(s))'s^{-2\alpha+1}ds\qquad\forall\,x\in(0,1),
\]
which gives
\begin{equation}\label{us-iden-1}
	(x^{2\alpha-1}u(x))'=x^{2\alpha-2}\int_0^xw'(s)s^{1-2\alpha}ds.
\end{equation}

If $p=1$ we can use Tonelli's theorem to write
\begin{align*}
\int_0^1|(x^{2\alpha-1}u(x))'|dx&\leq  \int_0^1x^{2\alpha-2}\int_0^x|w'(s)|s^{1-2\alpha}ds\, dx\\
	&=\int_0^1 |w'(s)|s^{1-2\alpha} \int_s^1 x^{2\alpha-2}dx \, ds\\
	&=\frac{1}{1-2\alpha}\int_0^1 |w'(s)| s^{1-2\alpha}(s^{2\alpha-1}-1) ds\\
	& =\frac{1}{1-2\alpha}\int_0^1 |w'(s)| (1-s^{1-2\alpha}) ds\\
	& \leq C \|w'\|_{L^1}\\
	&\leq C \|f\|_{L^1}.
\end{align*}
If $1<p<\infty$ we use Hardy's inequality to get
\begin{align*}
\int_0^1|(x^{2\alpha-1}u(x))'|^p dx&\leq \int_0^1\pt{x^{2\alpha-2}\int_0^x|w'(s)|s^{1-2\alpha}ds}^p dx\\
&\leq \int_0^1\left(\frac{1}{x}\int_0^x|w'(s)|ds\right)^pdx\\
&\leq C\|w'\|_{L^p}^p\\
&\leq C\norm{f}_{L^p}^p.
\end{align*}
And if \(p=\infty\) then from \eqref{us-iden-1} we obtain
\[
\abs{(x^{2\alpha-1}u(x))'}\leq x^{2\alpha-2}\int_0^x\abs{w'(s)}s^{1-2\alpha}ds\leq \frac{1}{2-2\alpha}\norm{w'}_\infty\leq C\norm{f}_{L^\infty},
\]
so that in every possible case we deduce that $\|(x^{2\alpha-1}u)'\|_{L^p}\leq C\|f\|_{L^p}$. Furthermore, we have that
\[
(x^{2\alpha-1}u(x))'=(2\alpha-1)x^{2\alpha-2}u(x)+x^{2\alpha-1}u'(x),
\]
where if we notice that $x^{2\alpha-1}u(x)=\frac{1}{2\alpha-1}\left(x(x^{2\alpha-1}u(x))' - w(x)\right)$ then it follows that
\[
\|x^{2\alpha-1}u\|_{L^p}\leq C\|f\|_{L^p},
\]
so that \(x^{2\alpha-1}u\in W^{1,p}(0,1)\) with $\|x^{2\alpha-1}u\|_{W^{1,p}}\leq C\|f\|_{L^p}$ and \cref{tag2} is proven.

Observe that by writing \(x^{2\alpha}u=x\cdot x^{2\alpha-1}u\) we see that
\[
\|x^{2\alpha}u\|_{L^p}\leq\|x^{2\alpha-1}u\|_{L^p}\leq C\|f\|_{L^p},
\]
and also we have 
\[
(x^{2\alpha}u(x))'=2\alpha x^{2\alpha-1}u(x)+w(x)\in W^{1,p}(0,1),
\]
thus $\|x^{2\alpha}u\|_{W^{2,p}}\leq C\|f\|_{L^p}$ and \cref{tag3} is proven and as a consequence we also have that \(u\in W^{2,p}_{loc}((0,1])\).

Finally \cref{tag4} follows from the embedding \(X^\alpha\hookrightarrow C^{\frac12-\alpha}([0,1])\) whereas to obtain \cref{tag5} when \(0<\alpha<\frac12\), we recall that \(u(0)=0\) so we can we write
$$
|u(x)|\leq \int_0^x|u'(s)|ds=\int_0^x |w(s)|s^{-2\alpha}ds
$$
and we observe that for $1<p<\infty$ and \(2\alpha+\frac1p<1\) this gives
$$
|u(x)|\leq C\left(\int_0^x|w(s)|^{p}ds\right)^{1/p}x^{1-2\alpha-\frac{1}{p}}
$$
where as if \(p=\infty\) one gets
$$
|u(x)|\leq C\norm{w}_{L^\infty(0,x)}x^{1-2\alpha}
$$
so that if \(2\alpha+\frac1p<1\) we improve the behavior at \(x=0\) by
\[
\abs{u(x)}\leq Cx^{1-2\alpha-\frac1p}\norm{f}_p\quad\text{if }1<p\leq\infty.
\]

\subsubsection{The case \(\alpha<0\)}

To prove that \(w=x^{2\alpha}u'\in W^{1,p}(0,1)\) we need to proceed differently as in the case \(\alpha>0\) because we do not longer have the inequality \(\norm{w}_{L^2}\leq \norm{x^\alpha u'}_{L^2}\) directly. Instead we take $g\in C^2(0,1]\cap C([0,1])$ as the solution of \eqref{Aux}, namely of
\begin{equation}
\left\{
\begin{aligned}
	-(x^{2\alpha}g'(x))'+g(x)&=0 & \text{ in } (0,1],\\
	g(1)&= 0, && \\
	\lim_{x\to 0^+}g(x)&= 1. &&
\end{aligned}
\right.
\end{equation}
whose existence and uniqueness is guaranteed by \cref{lemmabes3} in the Appendix. For \(\ve\in (0,1)\) we multiply \eqref{SL-eq} by \(g\) and integrate by parts on the interval \((\ve,1)\) to obtain
$$
-w(x)g(x)|_{\ve}^1 +\int_{\ve}^1x^{2\alpha}u'(x)g'(x)dx+\int_{\ve}^1u(x)g(x)dx= \int_{\ve}^1f(x)g(x)dx.
$$
Since \(g\in C^2((0,1])\) and $g(1)=0$ we can integrate by parts once more to obtain
\begin{multline*}
	w(\ve)g(\ve)+x^{2\alpha}g'(x)u(x)\Big|_{\ve}^1 -\int_{\ve}^1(x^{2\alpha}g'(x))'u(x)dx+\int_{\ve}^1g(x)u(x)dx\\ =\int_{\ve}^1f(x)g(x)dx.
\end{multline*}

Because $\lim\limits_{x\to 0^+}u(x)=u(1)=0$, $\lim\limits_{x\to 0^+} x^{2\alpha} g'(x)$ converges, and since $g$ satisfies (\ref{Aux}) we have as a consequence that $\lim\limits_{x\to 0^+}w(x)$ exists and it is given by
\begin{equation}\label{limit-w}
\lim_{x\to 0^+}w(x)= \int_0^1f(x)g(x)dx,
\end{equation}
furthermore, for $x \in(0,1)$ it holds that
$$
w(x)= \int_0^x w'(s)ds + \lim_{s\to 0^+}w(s),
$$
so that
\[
|w(x)|\leq \int_0^1 |w'(s)|ds + \int_0^1|f(x)g(x)|dx
\]
and as a consequence
$$
|w(x)|\leq \int_0^1 |w'(s)|ds + \int_0^1|f(x)g(x)|dx ,
$$
but then
$$
\norm{w}_{L^p}\leq \|w\|_{L^{\infty}}\leq \|w'\|_{L^1}+ \|g\|_{L^\infty}\, \|f\|_{L^1}\leq C\pt{\|w'\|_{L^p}+\|f\|_{L^p}}\leq C\norm{f}_{L^p},
$$
thus we have shown that $\|w\|_{W^{1,p}}\leq C\|f\|_{L^p}$.

Notice that \eqref{us-iden-1} remains valid for \(\alpha<0\) and hence 
$$
|(x^{2\alpha-1}u(x))'|\leq x^{2\alpha-2}\int_0^x|w'(s)|s^{1-2\alpha}ds.
$$
therefore the same conclusions reached for the case \(0<\alpha<\frac12\) can be reached for \(\alpha<0\), with the addition that in this case the limit
\[
\lim_{x\to 0^+}x^{2\alpha-1+\frac1p}u(x)=0\quad\text{if }1<p<\infty\\
\]
also works for \(p=1\). We omit those details.

\subsection{Optimality of the behavior near \(0\)}\label{pf-rem1}
Here we prove \cref{obsnueva}, that is, we analyze the following quantity
$$
K_D(x)=\sup_{\|f\|_{L^p}\leq1}\left|x^{2\alpha-1}u(x)\right|
$$
and we show it is bounded above and below for sufficiently small \(x\). For that purpose consider $g\in C^2((0,1])\cap C([0,1])$, the solution of \eqref{Aux} and use \eqref{limit-w} to write
$$
t^{2\alpha}u'(t)=\int_0^1 g(s)f(s)ds+\int_0^t(s^{2\alpha}u'(s))'ds\qquad\forall\,t\in (0,1),
$$
thus
\begin{align*}
	u(x)&=\int_0^x t^{-2\alpha}\int_0^1 g(s)f(s)ds\,dt,+\int_0^xt^{-2\alpha}\int_0^t(f(s)-u(s))ds\,dt\\
	&=\int_0^x t^{-2\alpha}\int_0^1 g(s)f(s)ds\,dt,+\int_0^x\int_t^x t^{-2\alpha}(f(s)-u(s))dt\,ds\\
	&= \frac{x^{1-2\alpha}}{1-2\alpha}\int_0^1 g(s)f(s)ds
	+\frac{1}{1-2\alpha}\int_{0}^{x}(f(s)-u(s))(x^{1-2\alpha}-s^{1-2\alpha})ds
\end{align*}
which gives
$$
|x^{2\alpha-1}u(x)|\leq C\pt{\int_0^1|f(s)g(s)|ds+\norm{f}_{L^p}},
$$
because \(\norm{f-u}_{L^1}\leq \norm{f}_{L^p}+\norm{u}_{L^p}\leq (1+C)\norm{f}_{L^p}\) and \(s^{1-2\alpha}\leq x^{1-2\alpha}\) for \(s\leq x\) and \(\alpha<\frac12\). In particular we obtain that
\[
K_D(x)\leq C(1+\|g\|_{\infty}).
\]

On the other hand, observe that for \(1<p\leq \infty\) we have
$$
\abs{x^{2\alpha-1}\int_{0}^{x}s^{1-2\alpha}(f(s)-u(s))ds}\leq \int_0^x\abs{f(s)-u(s)}ds=x^{1-\frac1p}\norm{f}_p,
$$
and for \(p=1\)
$$
\abs{x^{2\alpha-1}\int_{0}^{x}s^{1-2\alpha}(f(s)-u(s))ds}\leq \int_0^x\abs{f(s)-u(s)}ds=o(x),
$$
where \(o(x)\) is a quantity that goes to \(0\) as \(x\to 0\) (depending on \(f\)). Therefore for every \(1\leq p\leq \infty\) we have
$$
x^{2\alpha-1}u(x) =\frac{1}{1-2\alpha}\int_0^1g(s)f(s)ds+o(x).
$$
and if we fix \(f\equiv 1\) we get 
\[
K_D(x)\geq \frac{1}{1-2\alpha}\abs{\int_0^1g(s)ds}-o(x)>0
\]
for all sufficiently small \(x>0\)

\section{Analysis of \(u_N\in X^\alpha_{\cdot 0}\)}\label{reg-Neu}

\subsection{The cases \(\alpha<\frac12\), or \(\frac12\leq \alpha<1\) and \(p\geq \frac{2}{3-2\alpha}\)}\label{sect-pbig}

As we established in \cref{sect-existence} there exists $u_N\in X_{\cdot 0}^\alpha$ satisfying \eqref{w-SL-eq} for every \(v\in X^\alpha_{\cdot 0}\) with $\|u_N\|_{X^{\alpha}_{\cdot 0}}\leq C\|f\|_{L^p}$. Since no confusion is present, we will simple denote by \(u\) such solution throughout this part. Also we will suppose for the moment that \(u\in L^p\) with \(\norm{u}_p\leq C\norm{f}_p\) to do some computations and later we will prove it.

By taking $v\in C_c^{\infty}(0,1)$, then $w'= u-f$ a.e. in (0,1) for $w(x)= x^{2\alpha}u'(x)$ from where we obtain 
$$
\|w'\|_{L^p}\leq \|u\|_{L^p}+ \|f\|_{L^p}\leq C \|f\|_{L^p}.
$$
For \(0<\ve<1\) we multiply \eqref{SL-eq} by $v\in C^2[0,1]$ and integrate over \((\ve,1)\) to obtain
\begin{align*}
0&=-\int_{\ve}^1 (x^{2\alpha} u'(x))' v(x) dx + \int_{\ve}^1 (u(x)-f(x))v(x)dx\\
	&=-x^{2\alpha} u'(x)v(x)\Big|_{\ve}^1 + \int_{\ve}^1 x^{2\alpha}u'(x)v'(x)dx+\int_{\ve}^1 u(x)v(x)dx-\int_{\ve}^1 f(x)v(x)dx,
\end{align*}
therefore, $\lim_{\ve\to 0^+} x^{2\alpha} u'(x)v(x)\Big|_{\ve}^1=0$ and if we take $v$ such that $v(1)=0$ and $v(0)=1$ then we have that
\begin{equation}\label{*3}
	\lim_{x\to 0^+}x^{2\alpha}u'(x)=0.
\end{equation}
The above allows us to write for every \(x\in (0,1)\)
$$
|w(x)|\leq \int_0^x |(x^{2\alpha}u'(x))'|dx\leq \|w'\|_{L^p}\leq C\|f\|_{L^p},
$$
which implies that, $\|w\|_{W^{1,p}(0,1)}\leq C\|f\|_{L^p}$. In particular $\|u'\|_{L^p} \leq C_{\alpha,p}\|f\|_{L^p}$ when \(\alpha<0\), so that $w,u\in W^{1,p}(0,1)$ in this case.

Now for $1\leq p<\infty$ we have
$$
|x^{2\alpha}u'(x)|\leq \int_0^x |w'(x)|dx\leq x^{1-\frac{1}{p}}\left(\int_0^x |w'(x)|^pdx\right)^{\frac{1}{p}},
$$
and if \(p=\infty\)
$$
|x^{2\alpha}u'(x)|\leq x\norm{w'}_{L^\infty},
$$
so it follows that
\[
\lim_{x\to 0^+}x^{2\alpha-1+\frac{1}{p}}u'(x)=0\quad\text{for every }1< p\leq \infty.
\]
Observe that the case \(p=1\) can be included in the above limit because
\[
x^{2\alpha}u'(x)\xrightarrow[x\to 0^+]{}0
\]
thanks to \eqref{*3}. Similarly we have 
$$
|x^{2\alpha-1}u'|\leq\frac{1}{x}\int_0^x|w'(s)|ds,
$$
so that for \(1<p\leq \infty\) we can use Hardy's inequality to obtain that
\begin{equation}\label{*4}
\begin{aligned}
	\int_0^1|x^{2\alpha-1}u'(x)|^pdx &\leq \int_0^1 \left(\frac{1}{x}\int_0^x\abs{w'(s)}ds\right)^pdx\\
	&\leq C \|w'\|_{L^p}^p,
\end{aligned}
\end{equation}
therefore $\|x^{2\alpha-1}u'\|_{L^p}\leq C\|f\|_{L^p}$ and since we are assuming that  \(\norm{u}_{L^p}\leq C\norm{f}_{L^p}\), from the equation satisfied by \(u\) we also conclude that \(\norm{x^{2\alpha} u''}_{L^p}\leq C\norm{f}_{L^p}\). Observe that all these bounds tell us that \(u\in W^{2,p}_{loc}(0,1)\).

To conclude we need to show that \(u\in L^p(0,1)\) with \(\norm{u}_p\leq C\norm{f}_p\). Firstly observe that if \(\alpha<\frac12\) then we have the embedding $X^\alpha\hookrightarrow L^\infty(0,1)$ so the fact that \(u\in X^\alpha_{\cdot 0}\) readily implies that $\|u\|_{L^p}\leq C\|f\|_{L^p}$. Secondly, if \(\alpha=\frac12\) and \(p<\infty\) then we have the embedding $X^\alpha\hookrightarrow L^q(0,1)$ for every \(q<\infty\), in particular for \(q=p\) it follows that $\|u\|_{L^p}\leq C\|f\|_{L^p}$, just as before. Therefore we are left with: the case \(\alpha=\frac12\) with \(p=\infty\), and the case \(\frac{1}{2}<\alpha<1\) with \(1\leq p\leq \infty\).

\begin{remark}\label{rem-p0-1}
Observe that for every \(\alpha<\frac12\) and every \(1\leq p<\infty\) the above shows that there exists some \(p_0>p\) such that in fact \(\norm{u}_{L^{p_0}}\leq C\norm{f}_{L^p}\).
\end{remark}

\subsubsection{The case \(\alpha=\frac12\) and \(p=\infty\)}

Recall that \(X^{\alpha}\hookrightarrow L^2\) so in particular we have that
\[
\norm{u}_{L^2}\leq \norm{f}_{L^\infty}.
\]
By following what we did to reach \eqref{*4} for \(\alpha=\frac12\) we deduce that
\[
\norm{u'}_{L^2}\leq C\norm{(xu')'}_{L^2}=C\norm{u-f}_{L^2}\leq C\norm{f}_{L^\infty},
\]
so that \(u\in H^1_0(0,1)\), which in particular implies
\[
\norm{u}_{L^\infty}\leq C\norm{f}_{L^\infty}
\]
due to the embedding \(H^1(0,1)\hookrightarrow L^\infty(0,1)\).

\subsubsection{The case \(\frac12<\alpha<1\) and \(\frac{2}{3-2\alpha}\leq p\leq \infty\)}\label{ssect-iter}

Notice that if $\frac{1}{2}<\alpha<1$ then it holds that $\frac{2}{3-2\alpha}< \frac{2}{2\alpha-1}$, and we analyze the situation by cases.

If $\frac{2}{3-2\alpha}\leq p\leq \frac{2}{2\alpha-1}$, and because $X^{\alpha} \hookrightarrow L^q(0,1)$ for all $q\leq \frac{2}{2\alpha -1}$, it follows that
$$
\|u\|_{L^p} \leq C \|u\|_{X^{\alpha}}  \leq C \|f\|_{L^p},
$$
and we are done.

If $\frac{2}{2\alpha-1}<p$: by the same argument used in \eqref{*4} we have that
\begin{align*}
	\|x^{2\alpha-1}u'\|_{L^{\frac{2}{2\alpha-1}}}&\leq C\norm{w'}_{L^{\frac{2}{2\alpha-1}}}\\
	&\leq C\left(\|u\|_{\frac{2}{2\alpha-1}}+\|f\|_{\frac{2}{2\alpha-1}}\right)\\
	&\leq C\left(\|u\|_{X^{\alpha}}+\|f\|_{L^p}\right)\\
	&\leq C\|f\|_{L^p},
\end{align*}
hence $x^{2\alpha-1}u' \in L^{\frac{2}{2\alpha-1}}$ and $u \in L^{\frac{2}{2\alpha -1}}$, therefore $u\in X^{2\alpha-1, {\frac{2}{2\alpha-1}}}$ with
\[
\|u\|_ {X^{2\alpha-1, {\frac{2}{2\alpha-1}}}}\leq C \|f\|_{L^p}.
\]
Furthermore, from \cite[Theorem A.2]{CW11-1} we have
\begin{equation*} 
	u\in X^{2\alpha-1, {\frac{2}{2\alpha-1}}} \hookrightarrow
	\begin{dcases}
		L^q(0,1)& \text{ for all } q<\infty  \quad\text{ if } \alpha \leq\frac{5}{6}\\
		L^{q}(0,1)& \text{ for all } q\leq\frac{2}{6\alpha-5} \quad\text{ if } \frac{5}{6}<\alpha< 1.
	\end{dcases}
\end{equation*}
Therefore for $\alpha<\frac{5}{6}$ it follows that $\|u\|_{L^p} \leq C\|u\|_{ X^{2\alpha-1, {\frac{2}{2\alpha-1}}}}\leq C \|f\|_{L^p}$, whereas for $\frac{5}{6}<\alpha<1$ we have that $\frac{2}{2\alpha-1}<\frac{2}{6\alpha-5}$.

We repeat the above argument and separate the analysis into two cases: if $\frac{2}{2\alpha-1}\leq p\leq\frac{2}{6\alpha-5}$: as $X^{2\alpha-1, {\frac{2 }{2\alpha-1}}} \hookrightarrow L^{q}(0,1) \text{ for all } q \leq \frac{2}{6\alpha-5}$, then it follows that
$$
\|u\|_{L^p} \leq C_{\alpha} \|u\|_{X^{2\alpha-1, {\frac{2}{2\alpha-1}}} }  \leq C \|f\|_{L^p}.
$$
And if $\frac{2}{6\alpha-5}<p$: from (\ref{*4}) we have
\begin{align*}
	\|x^{2\alpha-1}u'\|_{L^{\frac{2}{6\alpha-5}}}&\leq C_{p} \left(\|u\|_{\frac{2}{6\alpha-5}}+\|f\|_{\frac{2}{6\alpha-5}}\right)\\
	&\leq C\left(\|u\|_{X^{2\alpha-1, {\frac{2}{2\alpha-1}}}}+\|f\|_{L^p}\right)\\
	&\leq C\|f\|_{L^p},
\end{align*}
hence
\begin{equation*} 
	u\in X^{2\alpha-1, {\frac{2}{6\alpha-5}}} \hookrightarrow
	\begin{dcases}
		L^q(0,1)& \text{ for all } q<\infty  \quad \text{ if } \alpha \leq\frac{9}{10},\\
		 L^{q}(0,1)& \text{ for all } q  \leq \frac{2}{10\alpha-9} \quad  \text{ if } \frac{9}{10}<\alpha< 1,
	\end{dcases}
\end{equation*}
therefore if $\alpha<\frac{9}{10}$ then $\|u\|_{L^p}\leq C \|f\|_{L^p}$.

The above can be repeated inductively in the following fashion. Let $n\in \NN$ and suppose that for $\alpha > \frac{4n-3}{4n-2}$ we have
\begin{equation*} 
	u\in X^{2\alpha-1, r_{n-1}} \hookrightarrow
	\begin{dcases}
		L^q(0,1)& \text{ for all } q<\infty  \quad \text{ if } \alpha \leq \frac{4n+1}{4n+2}\\  L^{q}(0,1)& \text{ for all } q  \leq r_n \quad  \text{ if } \frac{4n+1}{4n+2}<\alpha< 1
	\end{dcases}
\end{equation*}
with $\|u\|_{ \mathbb{Y}}\leq C \|f\|_{L^p}$, where $\mathbb{Y}:=  X^{2\alpha-1,r_{n-1} }$ and \(r_n=\frac{2}{(4n+2)\alpha-(4n+1)}\).

Since $\mathbb{Y} \hookrightarrow L^q(0,1)$ for all $q \leq r_n$ and if $r_{n-1}\leq p\leq r_n$ then we have that $\|u\|_{L^p}\leq C\|f\| _{L^p}$. Whereas if $r_{n-1}<r_n<p $ we have 
\begin{align*}
	\|x^{2\alpha-1}u'\|_{L^{r_n}}&\leq C_{p} \left(\|u\|_{L^{r_n}}+\|f\|_{L^{r_n}}\right)\\
	&\leq C_{\alpha}\left(\|u\|_{\mathbb{Y}}+\|f\|_{L^p}\right)\\
	&\leq C\|f\|_{L^p}.
\end{align*}
Therefore
\begin{equation*} 
	u\in X^{2\alpha-1, r_n} \hookrightarrow
	\begin{dcases}
		L^q(0,1)& \text{ for all } q<\infty  \quad \text{ if } \alpha \leq \frac{4(n+1)+1}{4(n+1)+2}\\
		L^{q}(0,1)& \text{ for all } q  \leq r_{n+1} \quad  \text{ if } \frac{4(n+1)+1}{4n+2}<\alpha< 1
	\end{dcases}
\end{equation*}
with $\|u\|_{X^{{2\alpha}-1, r_n}} \leq C\|f\|_{L^p} $.

And since for any $\alpha<1$ there exists $n\in \mathbb{N}$ such that $\alpha < \frac{4n+1 }{4n+2}$ we can conclude that $\|u\|_{L^p}\leq C\|f\|_{L^p}$ for every \(\alpha<1\) and every \(\frac{2}{2\alpha-1}\leq p\leq \infty\).

\begin{remark}\label{rem-p0-2}
Note that for every \(\frac12\leq\alpha<1\) and every \(\frac2{3-2\alpha}\leq p<\infty\) the above shows that there exists some \(p_0>p\) for which \(\norm{u}_{L^{p_0}}\leq C\norm{f}_{L^p}\).
\end{remark}

\subsection{The case \(\frac12<\alpha<1\) and \(1\leq p<\frac{2}{3-2\alpha}\)}\label{ssect-iter2}
We just need to prove that \(\norm{u}_{L^p}\leq C\norm{f}_{L^p}\) as the rest of the argument from \cref{sect-pbig} remains valid if \(u\in X^{\alpha,q}_{\cdot 0}\) instead of in \(X^{\alpha}_{\cdot 0}\). The argument to show the \(L^p\) bound for \(u\) is similar to what we did in \cref{ssect-iter}, so we only show the first steps and the rest is left to the reader.

Recall that the solution found belongs to \(X^{\alpha,q}_{\cdot 0}\) for all \(1\leq q<\frac1\alpha\leq 2\). We identify two cases, either \(\alpha\leq\frac34 \Leftrightarrow \frac{2}{3-2\alpha}\leq\frac1\alpha\) or \(\frac34<\alpha<1\Leftrightarrow\frac{2}{3-2\alpha}\geq \frac1\alpha\). In the first case, since \(u\in X^{\alpha,q}_{\cdot 0}\) for every \(q<\frac1\alpha\), then in particular we can take \(q=p<\frac{2}{3-2\alpha}\leq\frac1\alpha\) so that \(u\in X^{\alpha,p}_{\cdot 0}\) and therefore
\begin{equation*}\label{siter1}
\alpha\leq \frac34\Rightarrow\norm{u}_{L^p}\leq C\norm{f}_{L^p}.
\end{equation*}

If \(\frac34< \alpha<1\Leftrightarrow\frac{2}{3-2\alpha}>\frac1\alpha\) we also have two cases, either \(1\leq p<\frac1\alpha\) or \(\frac1\alpha\leq p<\frac2{3-2\alpha}\). If \(1\leq p<\frac1\alpha\) then we again can use \(q=p\) and the solution belongs to \(X^{\alpha,p}_{\cdot 0}\) so in particular 
\[
\norm{u}_{L^p}\leq C\norm{f}_{L^p}.
\]

If \(\frac1\alpha\leq p<\frac2{3-2\alpha}\) we observe that the solution \(u\) still verifies the weak equation
\[
\int_0^1\pt{x^{2\alpha}u'(x)v'(x)+u(x)v(x)}dx=\int_0^1f(x)v(x)dx\qquad \forall\,v\in C_c^1([0,1]),
\]
and in particular we still have that \(-(x^{2\alpha}u')'+u(x)=f(x)\) a.e. with 
\[
\lim_{x\to 0^+}x^{2\alpha}u'(x)=0,
\]
so that the identity 
\[
x^{2\alpha}u'(x)=\int_0^x (u(s)-f(s))ds\Rightarrow x^{2\alpha-1}u'(x)=\frac1x\int_0^x (u(s)-f(s))ds
\]
remains valid. In particular, for any \(q>1\) Hardy's inequality tells us that
\[
\norm{x^{2\alpha-1}u'}_{L^q}\leq C\pt{\norm{u}_{L^q}+\norm{f}_{L^q}}.
\]
Hence if \(\frac1\alpha\leq p<\frac2{3-2\alpha}\) then we can use any \(1<q<\frac1\alpha\) and write
\[
\norm{x^{2\alpha-1}u'}_{L^q}\leq C\pt{\norm{u}_{L^q}+\norm{f}_{L^q}}\leq C\norm{f}_{L^p}.
\]
In particular \(u\in X^{2\alpha-1,q}\) for any \(1<q<\frac1\alpha\) and because \(\alpha\geq\frac34\) we have \(q<\frac1\alpha<\frac{1}{2(1-\alpha)}\) and as a consequence
\[
\norm{u}_{L^t}\leq C\norm{u}_{X^{2\alpha-1,q}}
\]
for all \(t<\frac{1}{3\alpha-2}\).

The above can be divided again into two cases, either \(\alpha\leq \frac78\Leftrightarrow \frac{1}{3\alpha-2}\geq \frac{2}{3-2\alpha}\) or \(\frac78<\alpha<1\Leftrightarrow\frac{1}{3\alpha-2}<\frac{2}{3-2\alpha}\). If \(\frac34\leq \alpha\leq \frac78\) we have \(p<\frac{2}{3-2\alpha}\leq \frac{1}{3\alpha-2}\) so we can take \(t=p\) and deduce that
\begin{equation*}\label{siter2}
\alpha\leq \frac78\Rightarrow \norm{u}_{L^p}\leq C\norm{f}_{L^p},
\end{equation*}
whereas if \(\frac78<\alpha<1\Leftrightarrow\frac{1}{3\alpha-2}<\frac{2}{3-2\alpha}\) we consider the two cases: \(1\leq p<\frac1{3\alpha-2}\) and \(\frac1{3\alpha-2}\leq p<\frac{2}{3-2\alpha}\). Just as before, if \(1\leq p<\frac1{3\alpha-2}\) then we immediately deduce that
\[
\norm{u}_{L^p}\leq C\norm{f}_{L^p},
\]
and if \(\frac1{3\alpha-2}\leq p<\frac{2}{3-2\alpha}\) then \(u\in X^{2\alpha-1,q}\) for any \(q<\frac1{3\alpha-2}\). Observe that if \(\frac78<\alpha\leq \frac45\) then \(X^{2\alpha-1,q}\) embeds into \(L^t\) for all \(t<\infty\) hence
\[
\norm{u}_{L^p}\leq C\norm{f}_{L^p}
\]
and if \(\frac45<\alpha<1\) then 
\[
\norm{u}_{L^t}\leq C\norm{f}_{L^p}\qquad\forall\, t<\frac{1}{5\alpha-4}.
\]
In particular
\begin{equation*}\label{siter3}
\pt{\alpha\leq \frac{11}{12}\Leftrightarrow \frac{2}{3-2\alpha}\leq \frac{1}{5\alpha-4}}\Rightarrow \norm{u}_{L^p}\leq C\norm{f}_{L^p}.
\end{equation*}

As the reader can easily verify, after \(n\) steps of the above iteration we are led to the following: for any \(n\in \NN\) we have
\[
\alpha\leq \frac{3+4n}{4+4n}\Rightarrow \norm{u}_{L^p}\leq C\norm{f}_{L^p},
\]
and the desired bound is obtained since \(\frac{3+4n}{4+4n}\xrightarrow[n\to\infty]{}1\).

\begin{remark}\label{rem-p0-3}
Notice again that for every \(\frac12\leq\alpha<1\) and every \(1\leq p<\frac2{3-2\alpha}\) there exists some \(p_0>p\) such that \(\norm{u}_{L^{p_0}}\leq C\norm{f}_{L^p}\).
\end{remark}

\subsubsection{The proof of \cref{rem-p=12}}\label{rem-p=13}

As me mentioned in \cref{rem-p=12}, and as the reader can see from the above results, the existence part in \cref{Thm-NEU} and the fact that \(\norm{u_N}_{L^p}\leq C\norm{f}_{L^p}\) remain valid for \(p=1\), but it is not necessarily true that
\[
x^{2\alpha-1}u'\in L^1(0,1)
\]
for given \(f\in L^1(0,1)\). To see this consider \(f(x)=\pt{x(1-\ln x)^{\frac32}}^{-1}\) and observe that it belongs to \(L^1(0,1)\) so but we did before we can find a solution \(u_N\) in \(X^{\alpha,q}\) for every \(1\leq q<\frac1\alpha\). In particular \(u_N\in L^q\) for some \(q>1\). However if we consider the problem
\[
\left\{
\begin{aligned}
-(x^{2\alpha}v'(x))'&=f(x)&&\text{in }(0,1)\\
v(1)&=0\\
\lim_{x\to 0^+}x^{2\alpha}v'(x)&=0
\end{aligned}
\right.
\]
then a direct computation tells us that its solution verifies
\[
x^{2\alpha}v'(x)=-\frac{2}{(1-\ln x)^{\frac12}}.
\]
Finally, from \eqref{SL-eq} and the fact that \(x^{2\alpha}u_N'(x)\xrightarrow[x\to 0^+]{}0 \) we can write
\begin{align*}
x^{2\alpha-1}u_N'(x)&=\frac1x\int_0^x(u(s)-f(s))ds\\
&=\frac1x\int_0^x u_N(s)ds+x^{2\alpha-1}v'(x)\\
&=\frac1x\int_0^x u_N(s)ds-\frac{2}{x(1-\ln x)^{\frac12}}
\end{align*}
and thanks to Hardy's inequality the first term belongs to \(L^q(0,1)\subseteq L^1(0,1)\), but the second term is not in \(L^1(0,1)\), therefore \(x^{2\alpha-1}u_N'\) cannot be in \(L^1\) and \emph{a fortiori} neither can \(x^{2\alpha}u_N''\).

\subsection{Optimality of the behavior near \(0\)}\label{pf-rem2}

Here we show \cref{obsN2}, that is we prove that
\[
K_N(x)=\sup_{\|f\|_{L^p}\leq1}\left|x^{2\alpha-1+\frac{1}{p}}u'(x)\right|.
\]
is bounded above and below for sufficiently small \(x\), to do we notice that for \(1<p<\infty\)
\begin{align*}
	x^{2\alpha-1+\frac{1}{p}}u'(x) = x^{\frac{1}{p} -1}\int_0^x
	(s^{2\alpha}u'(s))' ds = x^{\frac{1}{p} -1}\int_0^x (u(s)- f(s))ds.
\end{align*}
Then,
\begin{align*}
	|x^{2\alpha-1+\frac{1}{p}}u'(x)|&\leq x^{\frac{1}{p} -1}\int_0^x |f(s)-u(s)|ds\\
	& \leq \|f-u\|_{L^p}\\
	&\leq C\|f\|_{L^p}.
\end{align*} 
And if we observe that the above remains valid for \(p=1\) and \(p=\infty\) we conclude that  $K_N(x)\leq C$ for all \(1\leq p\leq\infty\) and all \(0<x<1\).

Observe that from \cref{rem-p0-1,rem-p0-2,rem-p0-3} we know that for every \(1\leq p<\infty\) and every \(\alpha<1\) the solution \(u\) belongs to \(L^{p_0}\) for some \(p_0>p\) with \(\norm{u}_{L^{p_0}}\leq C\norm{f}_{L^p}\). From this fact we can write
\[
\abs{\int_0^xu(s)}ds\leq Cx^{1-\frac1{p_0}}\norm{f}_{L^p}.
\]

Now, for fixed \(0<x<1\) we define a function $f_x$ as
$$
f_x(t)=\begin{dcases}
	x^{-\frac{1}{p}}&  \text{if } 0\leq t\leq x,\\
	 0& \text{if } x<t\leq 1,
	 \end{dcases}
$$
which satisfies $\|f_x\|_{L^p}=1$ and
$$
\int_0^xf_x(s)ds=x^{1-\frac1p},
$$
for all \(1\leq p<\infty\). Therefore, for each $0<x<1$ we have
\begin{align*}
	K_N(x)&\geq \left|x^{\frac{1}{p} -1}\int_0^x (u(s)- f_x(s))ds\right|\\
	&\geq x^{\frac{1}{p} -1}\int_0^xf_x(s)ds -  x^{\frac{1}{p} -1}\int_0^x|u(s)|ds \\
	&\geq 1-Cx^{\frac{1}{p}-\frac1{p_0}}\norm{f}_{L^p},
\end{align*}
thus
\[
K_N(x)\geq 1-Cx^{\frac{1}{p}-\frac1{p_0}}\geq\frac12
\]
for all sufficiently small \(x\).

If \(p=\infty\), we take \(f\equiv 1\) and by \cref{obsbess} the solution \(u\) can be written as
\[
u(x)=A\phi_-(x)+1,
\]
for some \(A\neq 0\) so that
\[
x^{2\alpha-1}u'(x)=A(2-2\alpha)b_2+O(x^{2-2\alpha}),
\]
where \(O(x^\beta)\) is a quantity that can be bounded above and below by \(Cx^\beta\) for some constant \(C>0\). Thus 
\[
K_N(x)\geq \abs{A(2-2\alpha)b_2}-O(x^{2-2\alpha})\geq\frac{A(2-2\alpha)b_2}2
\]
for all sufficiently small \(x\).

\appendix

\section{Bessel's equations}\label{app-bessel}

Let us recall some results regarding Bessel's differential equations which are closely related to \eqref{SL-eq}. For $\nu\geq0$ we consider the modified Bessel equation 
\begin{equation}\label{bess1}
	y^2 f''(y)+yf'(y)-(y^2+\nu^2)f(y)=0,
\end{equation}

It is known that the solutions to \eqref{bess1} are given by the modified Bessel's functions. This fact and some additional properties of the modified Bessel functions are summarized in the following lemma (see \cite{W44} for a detailed treatise on Bessel functions)

\begin{lemma}\label{lemabesel2}
	For non-integer $\nu>0$, the general solution of \eqref{bess1} can be written as
	$$
	f_{\nu}(y)= C_1 I_{\nu}(y) + C_2 I_{-\nu}(y).
	$$
	The function $I_{\nu}$ is the modified Bessel functions of the first type, and it has the following power series expansion near the origin
	\begin{equation}\label{seriesI}
	I_{\nu}(y)=\sum_{m=0}^{\infty}\frac{1}{m! \, \Gamma(m+\nu +1)}\left(\frac{y}{2}\right)^{2m+\nu},
	\end{equation}
	and it satisfies
	\begin{equation}\label{der-I}
		I_\nu'(y)=\frac{I_{\nu+1}(y)+I_{\nu-1}(y)}{2}
	\end{equation}
	For integer \(\nu\geq 0\), the general solution of \eqref{bess1} can be written as
	$$
	f_{\nu}(y)= C_1 I_{\nu}(y) + C_2 K_{\nu}(y).
	$$
	The function $K_{\nu}$ is the modified Bessel function of the second which satisfies
	\begin{multline}\label{seriesK1}
	K_\nu(y)=\frac{1}{2} \sum_{m=0}^{\nu-1}
	\frac{(-1)^{m}(\nu-m-1)!}{m!}\left(\frac{y}{2}\right)^{2m-\nu}\\
	+(-1)^{{\nu}+1} \sum_{m=0}^{\infty}
	\frac{\left(\frac{y}{2}\right)^{\nu+2 m}}{m!(\nu+m)!}\left\{\log \left(\frac{y}{2} \right)-\frac{1}{2} \psi(m+1)-\frac{1}{2} \psi(\nu+m+1)\right\},
	\end{multline}
	for \(\nu>0\) and
	\begin{equation}\label{seriesK0}
	K_0(y)=-\log\pt{\frac{y}{2}}I_0(y)+\sum_{m=0}^{\infty}
		\frac{\left(\frac{y}{2}\right)^{2m}}{(m!)^2}\psi(m+1),
	\end{equation}
	where \(\psi(z)=\dfrac{\Gamma'(z)}{\Gamma(z)}\) is the digamma function. The function \(K_\nu\) satisfies in addition
	\begin{equation}\label{der-K}
		K_\nu'(y)=-\frac{K_{\nu+1}(y)+K_{\nu-1}(y)}{2}
	\end{equation}
\end{lemma}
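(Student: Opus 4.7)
The statement is classical, but it admits a self-contained proof via the Frobenius method. I would substitute a generalized power series $f(y)=y^{r}\sum_{m\geq 0}a_m y^m$ into \eqref{bess1} and collect coefficients. The indicial equation $r^2=\nu^2$ forces $r=\pm\nu$; matching the first order coefficient gives $a_1=0$; and at higher orders one obtains the two-step recursion $a_m=a_{m-2}/(m(m+2r))$. Choosing $r=\nu$ with the normalization $a_0=2^{-\nu}/\Gamma(\nu+1)$ recovers the series \eqref{seriesI} for $I_\nu$, whose convergence on all of $(0,\infty)$ is immediate from the ratio test; taking $r=-\nu$ with $a_0=2^{\nu}/\Gamma(-\nu+1)$ yields $I_{-\nu}$.

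For non-integer $\nu$, the solutions $I_\nu$ and $I_{-\nu}$ have distinct leading exponents $y^{\pm\nu}$, hence they are linearly independent and span the two-dimensional solution space of \eqref{bess1}. For integer $\nu$ this argument breaks down: the coefficients $1/\Gamma(-\nu+m+1)$ vanish for $m=0,\dots,\nu-1$, and a direct check gives the degeneracy $I_{-\nu}=(-1)^{\nu}I_\nu$. The standard remedy is to set
\[
K_\nu(y)=\frac{\pi}{2}\,\frac{I_{-\nu}(y)-I_\nu(y)}{\sin(\nu\pi)}
\]
for non-integer $\nu$ and extend to integer $\nu$ by L'H\^opital in the parameter $\nu$. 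Differentiating the series for $I_{\pm\nu}$ with respect to $\nu$ produces the logarithmic factor $\log(y/2)$ from $\partial_\nu(y/2)^{2m+\nu}$ and the digamma terms via $\partial_\nu[1/\Gamma(\nu+m+1)]=-\psi(\nu+m+1)/\Gamma(\nu+m+1)$; after separating the indices $m<\nu$ where the Gamma denominator has a pole, one arrives at \eqref{seriesK1}. The case $\nu=0$ is a simplification since the finite sum in \eqref{seriesK1} is empty and $\psi(m+1)=\psi(\nu+m+1)$ at $\nu=0$, which gives \eqref{seriesK0}.

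The derivative identities \eqref{der-I} and \eqref{der-K} are then routine: termwise differentiation of \eqref{seriesI} followed by the rewriting $2m+\nu=(m+\nu)+m$ reorganizes the series into $\tfrac12(I_{\nu-1}+I_{\nu+1})$, and the corresponding identity for $K_\nu$ follows by linearity in the combination $(I_{-\nu}-I_\nu)/\sin(\nu\pi)$ that defines it, with attention to the minus sign in \eqref{der-K}. The main obstacle is the limiting argument producing $K_\nu$ for integer $\nu$: one must verify that the poles of $\Gamma(-\nu+m+1)^{-1}$ at $m=0,\dots,\nu-1$ cancel exactly against the zero of $\sin(\nu\pi)$ so as to yield the finite first sum in \eqref{seriesK1}, while the contributions with $m\geq\nu$ combine into the logarithm-digamma series. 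Everything else amounts to power-series bookkeeping.
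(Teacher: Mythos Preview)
Your sketch is correct in substance and follows the classical route (Frobenius for the series, the limiting definition $K_\nu=\tfrac{\pi}{2}\,(I_{-\nu}-I_\nu)/\sin(\nu\pi)$ together with L'H\^opital in the parameter for integer $\nu$, and termwise differentiation for the recurrences). One small slip: for the \emph{modified} Bessel functions the degeneracy at integer order is $I_{-n}=I_n$, not $I_{-n}=(-1)^nI_n$; the alternating sign belongs to the ordinary Bessel functions $J_\nu$. This does not affect your argument, since the only point you use is that $I_{-\nu}$ and $I_\nu$ become linearly dependent when $\nu$ is an integer.

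As for comparison with the paper: the paper does not prove this lemma at all. It introduces the lemma as a summary of known facts and simply refers the reader to Watson's treatise \cite{W44}; there is no proof environment following the statement. So your proposal is not an alternative to the paper's proof but rather a self-contained justification where the paper opted for a citation. What you gain is independence from the reference; what the paper gains is brevity, since these identities are entirely standard and not the focus of the article.
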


\begin{remark}\label{rem-Bes1}
	Observe that for \(\nu>0\) the expansion \eqref{seriesK1} tells us that for \(y\sim 0\)
	\[
	K_{\nu}(y)=\frac{1}{2} \sum_{m=0}^{\nu-1}
	\frac{(-1)^{m}(\nu-m-1)!}{m!}\left(\frac{y}{2}\right)^{2m-\nu}+O\pt{y^\nu\log y},
	\]
	and that \eqref{seriesI} and \eqref{seriesK0} tell us that
	\[
	K_{0}(y)=-\gamma-\log\pt{\frac{y}{2}}+O(y^2\log y),
	\]
	where \(\gamma\) is Euler's constant.
\end{remark}

The above equations are relevant to us because of the following results, which is an adaptation of \cite[Lemma 4.1]{CW11-1}, but we will include its proof for the reader's convenience.

\begin{lemma} \label{lemabesel1}
	For $\alpha<1$, let $f_{\nu}$ a solution of \eqref{bess1} with $\nu=\abs{\frac{\frac12-\alpha}{1-\alpha}}$. Then $u(x)$, defined as $u(x)=x^{\frac{1}{2}-\alpha}f_{\nu}\left(\frac{x^{1-\alpha}}{1-\alpha}\right)$, solves
	$$-(x^{2\alpha}u'(x))'+u(x)=0.$$
\end{lemma}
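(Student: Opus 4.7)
The proof is a direct substitution; the only question is how to arrange the computation to keep it short. The plan is to introduce the change of variable $y=\frac{x^{1-\alpha}}{1-\alpha}$ (so that $\frac{dy}{dx}=x^{-\alpha}$) and carry out a chain-rule calculation of $(x^{2\alpha}u'(x))'$, then eliminate the second derivative $f_\nu''(y)$ via the Bessel equation \eqref{bess1}, and finally observe that the choice $\nu=\abs{\frac{\frac12-\alpha}{1-\alpha}}$ is precisely what makes the remaining terms cancel.

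Concretely, I would first differentiate $u(x)=x^{\frac12-\alpha}f_\nu(y)$ to obtain
\[
u'(x)=\left(\tfrac12-\alpha\right)x^{-\frac12-\alpha}f_\nu(y)+x^{\frac12-2\alpha}f_\nu'(y),
\]
so that
\[
x^{2\alpha}u'(x)=\left(\tfrac12-\alpha\right)x^{\alpha-\frac12}f_\nu(y)+x^{\frac12}f_\nu'(y).
\]
Differentiating once more and using $y'=x^{-\alpha}$ gives
\[
(x^{2\alpha}u'(x))'=-\left(\tfrac12-\alpha\right)^{2}x^{\alpha-\frac32}f_\nu(y)+(1-\alpha)x^{-\frac12}f_\nu'(y)+x^{\frac12-\alpha}f_\nu''(y).
\]

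Next I would rewrite $f_\nu''(y)$ using \eqref{bess1}, namely $f_\nu''(y)=\left(1+\frac{\nu^2}{y^2}\right)f_\nu(y)-\frac{f_\nu'(y)}{y}$, substitute into the above, and collect terms to get
\[
-(x^{2\alpha}u'(x))'+u(x)=\left(\tfrac12-\alpha\right)^{2}x^{\alpha-\frac32}f_\nu(y)-(1-\alpha)x^{-\frac12}f_\nu'(y)-\tfrac{\nu^{2}}{y^{2}}x^{\frac12-\alpha}f_\nu(y)+\tfrac{1}{y}x^{\frac12-\alpha}f_\nu'(y).
\]
Using the identities $\frac{x^{\frac12-\alpha}}{y}=(1-\alpha)x^{-\frac12}$ and $\frac{x^{\frac12-\alpha}}{y^{2}}=(1-\alpha)^{2}x^{\alpha-\frac32}$ (both immediate from the definition of $y$), the $f_\nu'(y)$ terms cancel and what remains is
\[
-(x^{2\alpha}u'(x))'+u(x)=\left[\left(\tfrac12-\alpha\right)^{2}-(1-\alpha)^{2}\nu^{2}\right]x^{\alpha-\frac32}f_\nu(y),
\]
which vanishes precisely because $\nu^{2}=\left(\tfrac{\frac12-\alpha}{1-\alpha}\right)^{2}$.

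There is no real obstacle here: the computation is mechanical and the choice of $\nu$ is forced by the final cancellation. The only thing one has to be slightly careful about is bookkeeping of the exponents of $x$, and the fact that $\alpha<1$ is what guarantees that the change of variable $y=x^{1-\alpha}/(1-\alpha)$ defines a smooth diffeomorphism of $(0,1]$ onto $(0,1/(1-\alpha)]$, so that all manipulations are justified on the open interval where $f_\nu$ is smooth.
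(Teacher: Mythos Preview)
Your proof is correct and is essentially the same direct chain-rule verification as in the paper: both compute $u'$, then $x^{2\alpha}u'$, then $(x^{2\alpha}u')'$, and match the result against the Bessel equation \eqref{bess1}. The only organizational difference is that the paper first rewrites \eqref{bess1} in the $x$-variable and then identifies the result with the computed $(x^{2\alpha}u')'$, whereas you carry the $y$-variable through and substitute for $f_\nu''$ at the end; the computations are otherwise identical.
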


\begin{proof}
Using \eqref{bess1} with $y=\frac{x^{1-\alpha}}{1-\alpha}$ we have
\begin{equation*}
	\frac{x^{2-2\alpha}}{(1-\alpha)^2}f''_{\nu}\left(y\right)+\frac{x^{1-\alpha}}{1-\alpha} f'_{\nu}\left(y\right)=\left[\frac{x^{2-2\alpha}}{(1-\alpha)^2} + \left(\frac{\frac12-\alpha}{1-\alpha}\right)^2\right] f_{\nu}(y) .
\end{equation*}
Multiplying by $(1-\alpha)^2x^{\alpha-\frac{3}{2}}$ we have
\begin{equation*}
	{x^{\frac{1}{2}-\alpha}}f''_{\nu}\left(y\right)+(1-\alpha){x^{-\frac{1}{2}}} f'_{\nu}\left(y\right)=\left[{x^{\frac{1}{2}-\alpha}} + \left({\frac12-\alpha}\right)^2x^{\alpha-\frac{3}{2}}\right] f_{\nu}(y) . 
\end{equation*}
	Then
\begin{equation} \label{b1}
	{x^{\frac{1}{2}-\alpha}}f''_{\nu}\left(y\right)+(1-\alpha){x^{-\frac{1}{2}}} f'_{\nu}\left(y\right) - \left({\frac{1}{2}-\alpha}\right)^2x^{\alpha-\frac{3}{2}} f_{\nu}(y)=x^{\frac{1}{2}-\alpha}f_{\nu}(y).
\end{equation}
Notice that if 
\begin{equation}\label{b2}
	u(x)=x^{\frac{1}{2}-\alpha}f_{\nu}\left(\frac{x^{1-\alpha}}{1-\alpha}\right) ,
\end{equation}
then
\begin{align*} 
	u'(x)&= \left(\frac{1}{2}-\alpha\right)x^{-\alpha -\frac{1}{2}}f_{\nu}(y)+x^{-2\alpha +\frac{1}{2}}f'_{\nu}(y),
\end{align*}
\begin{equation*}
	x^{2\alpha}u'(x)=\left(\frac{1}{2}-\alpha\right)x^{\alpha -\frac{1}{2}}f_{\nu}(y)+x^{\frac{1}{2}}f'_{\nu}(y).
\end{equation*}
Deriving
\begin{align*}
	(x^{2\alpha}u'(x))'
	&=- \left(\frac{1}{2}-\alpha\right)^2x^{\alpha-\frac{3}{2}}f_{\nu}(y)+\left(\frac{1}{2}-\alpha\right)x^{ -\frac{1}{2}}f'_{\nu}(y) \\
	&\qquad +\frac{1}{2}x^{-\frac{1}{2}}f_{\nu}'(y)+ x^{\frac{1}{2}-\alpha}f''_{\nu}(y) \\
	& = - \left(\frac{1}{2}-\alpha\right)^2x^{\alpha-\frac{3}{2}}f_{\nu}(y)+\left(1-\alpha\right)x^{ -\frac{1}{2}}f'_{\nu}(y) + x^{\frac{1}{2}-\alpha}f''_{\nu}(y).
	\end{align*}
	Replacing with \eqref{b1} and \eqref{b2} we have $(x^{2\alpha}u'(x))'=u(x)$. 
\end{proof}

The following lemma summarizes a few facts regarding \eqref{bess1} (see for instance \cite{W44}).

\begin{remark}
	\label{obsbess}    
	For $f\in C(0,1)$, the equation $-(x^{2\alpha} u'(x))'+u(x)=f(x)$ has a general solution given by $u(x)=A \phi_+(x)+B \phi_-(x)+F(x)$ where $\phi_+(x)$ and $\phi_-(x)$ are linearly independent solutions of $-(x^{2\alpha} u'(x))'+u(x)=0$ and $F(x)$ is given by
	$$
	F(x)=\phi_+(x)\int_0^x f(s)\phi_-(s)ds - \phi_-(x)\int_0^x f(s)\phi_+(s)ds.
	$$
	
	If \(\alpha<\frac12\) then \(\nu=\frac{\frac12-\alpha}{1-\alpha}\) is non-integer so from \cref{lemabesel1,lemabesel2} we deduce that $\phi_\pm(x)= x^{\frac{1}{2}-\alpha}I_{\pm \nu}\left(\frac{x^{1-\alpha}}{1-\alpha}\right)$ and therefore, we can verify that
	\[
	\phi_+(x)=a_1x^{1-2\alpha}+a_2x^{3-4\alpha}+a_3x^{5-6\alpha}+\cdots
	\]
	and
	\[
	\phi_-(x)=b_1+b_2x^{2-2\alpha}+b_3x^{4-4\alpha}+b_4x^{6-6\alpha}+\cdots,
	\]
	from where one obtains
	\[
	\phi_+'(x)=a_1(1-2\alpha)x^{-2\alpha}+a_2(3-4\alpha)x^{2-4\alpha}+a_3(5-6\alpha)x^{4-6\alpha}+\cdots
	\]
	and
	\[
	\phi_-'(x)=b_2(2-2\alpha)x^{1-2\alpha}+b_3(4-4\alpha)x^{3-4\alpha}+b_4(6-6\alpha)x^{5-6\alpha}+\cdots,
	\]
	for certain non-zero constants $a_i,$ $b_i$. Therefore for \(\alpha<\frac12\) we have
	\begin{equation}\label{prop-bes1}
	\phi_+(0)=0,
	\qquad\text{and}\qquad
	\phi_-(0)=b_1.
	\end{equation}
	and for any \(\alpha<1\)
	\begin{equation}\label{prop-bes2}
		\lim_{x\to 0}x^{2\alpha}\phi_+'(x)=a_1(1-2\alpha),
		\qquad\text{and}\qquad
		\lim_{x\to 0}x^{2\alpha}\phi_-'(x)=0.
	\end{equation}
\end{remark}

The above results allow us to obtain the following

\begin{lemma}\label{lemmabes3}
	For any \(\alpha<\frac12\) there exists $g\in C^2(0,1] \cap C^0[0,1]$ such as
	\begin{equation}\label{Aux}
		\left\{
		\begin{aligned}
			-(x^{2\alpha}g'(x))'+g(x)&=0 && \text{in } (0,1),\\
			g(1)&= 0, \\
			\lim_{x\to 0^+}g(x)&= 1.
		\end{aligned}
		\right.
	\end{equation}
	Furthermore $\lim\limits_{x\to 0^+}x^{2\alpha}g'(x)$ exists and it is non zero.
\end{lemma}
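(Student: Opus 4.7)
The plan is to construct $g$ explicitly as a linear combination of the two fundamental solutions $\phi_+$ and $\phi_-$ described in \cref{obsbess} and then use the uniqueness results of \cref{uniqD,uniqN} to ensure that all relevant constants are nonzero.

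First, I would look for a solution of the form $g(x) = A\phi_+(x) + B\phi_-(x)$, where $\phi_\pm$ are the two linearly independent solutions of $-(x^{2\alpha}u')'+u=0$ given by \cref{lemabesel1}. By \eqref{prop-bes1} and the fact that $\alpha<\frac12$ implies $1-2\alpha>0$, we have $\phi_+(0)=0$ and $\phi_-(0)=b_1\neq 0$, so the boundary condition $\lim_{x\to 0^+}g(x)=1$ forces $B=1/b_1$. The second condition $g(1)=0$ then becomes $A\phi_+(1)+B\phi_-(1)=0$, which can be solved for $A$ provided $\phi_+(1)\neq 0$.

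Next I would verify that both $\phi_+(1)\neq 0$ and $\phi_-(1)\neq 0$ by invoking the uniqueness results. If $\phi_+(1)=0$, then since $\phi_+(0)=0$ as well, $\phi_+$ would be a classical solution of the homogeneous Dirichlet problem \eqref{eq1D}, and hence $\phi_+\equiv 0$ by \cref{uniqD}, contradicting the fact that $\phi_+$ is nontrivial (its expansion starts with $a_1 x^{1-2\alpha}$ with $a_1\neq 0$). Analogously, if $\phi_-(1)=0$, then $\phi_-$ solves the homogeneous Neumann problem at $0$ (since $\lim_{x\to 0^+}x^{2\alpha}\phi_-'(x)=0$ by \eqref{prop-bes2}), and hence $\phi_-\equiv 0$ by \cref{uniqN}, again a contradiction. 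This gives $A=-\phi_-(1)/(b_1\phi_+(1))\neq 0$.

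Finally, using \eqref{prop-bes2} I would compute
\[
\lim_{x\to 0^+}x^{2\alpha}g'(x) = A\lim_{x\to 0^+}x^{2\alpha}\phi_+'(x) + B\lim_{x\to 0^+}x^{2\alpha}\phi_-'(x) = A\,a_1(1-2\alpha),
\]
which is nonzero because $A\neq 0$, $a_1\neq 0$ and $\alpha<\frac12$. The regularity $g\in C^2((0,1])\cap C^0([0,1])$ is then immediate: on $(0,1]$ the coefficients of the ODE are smooth so $\phi_\pm\in C^2((0,1])$, and continuity up to $0$ is built into the construction since $g(x)\to 1$ as $x\to 0^+$. I do not anticipate a serious obstacle here; the only subtle point is ruling out $\phi_+(1)=0$ and $\phi_-(1)=0$, and both are handled cleanly by the uniqueness results already established in \cref{sect-unique}.
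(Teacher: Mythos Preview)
Your proposal is correct and follows essentially the same construction as the paper: write $g=A\phi_++B\phi_-$, fix $B=1/b_1$ from the condition at $0$, fix $A=-\phi_-(1)/(b_1\phi_+(1))$ from the condition at $1$, and read off the limit of $x^{2\alpha}g'(x)$ from \eqref{prop-bes2}. The one genuine addition in your argument is the justification that $\phi_+(1)\neq 0$ and $\phi_-(1)\neq 0$ via \cref{uniqD,uniqN}; the paper simply writes down $A$ and asserts $Aa_1(1-2\alpha)\neq 0$ without commenting on why the denominator $\phi_+(1)$ is nonzero or why $A$ itself is nonzero (implicitly this follows from positivity of the modified Bessel functions $I_{\pm\nu}$ on $(0,\infty)$). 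Your route through the uniqueness propositions is a clean, self-contained alternative that avoids appealing to further Bessel-function facts.
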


\begin{proof}
	Observe that we can take $g(x)=A \phi_+(x) + B\phi_-(x)$ as in the previous remark. Indeed, we have that $\lim\limits_{x\to 0+}g(x)=Bb_1 $, therefore, if we take $B=\frac{1}{b_1}$, then $g(1)= A\phi_+(1) + \frac{1}{b_1} \phi_-(1)$. Taking $A=-\frac{\phi_-(1)}{b_1 \phi_+(1)}$, we have the boundary conditions verified. Therefore it is clear that
	\[
	\lim\limits_{x\to 0^+}x^{2\alpha}g'(x)=\lim\limits_{x\to 0^+}Ax^{2\alpha}\phi_+'(x)=Aa_1(1-2\alpha)\neq 0
	\]	
\end{proof}

\providecommand{\bysame}{\leavevmode\hbox to3em{\hrulefill}\thinspace}
\providecommand{\MR}{\relax\ifhmode\unskip\space\fi MR }
\providecommand{\MRhref}[2]{%
	\href{http://www.ams.org/mathscinet-getitem?mr=#1}{#2}
}
\providecommand{\href}[2]{#2}

\end{document}